\documentclass[a4paper,12pt]{amsart}

\usepackage{latexsym}
\usepackage{amssymb}
\usepackage{graphicx}
\usepackage{amsthm}
\usepackage{epsfig}
\usepackage{float}
\RequirePackage{color}
\usepackage[normalem]{ulem}
\usepackage{mathabx}

\usepackage{amscd,enumerate}
\usepackage{amsfonts}

\input xy
\xyoption{all}

\usepackage{multicol}
\usepackage{hyperref}
\hypersetup{ colorlinks,
linkcolor=blue,
filecolor=green,
urlcolor=blue,
citecolor=blue }

\def\Z{\hbox{\hu Z}}

\addtolength{\textwidth}{4cm} \addtolength{\oddsidemargin}{-2cm} \addtolength{\evensidemargin}{-2cm}
\textheight=22.15truecm

%\vfuzz2pt \hfuzz2pt

%\font\got=eufm10
%\font\gots=eufm10 at 7pt
%\addtolength{\textwidth}{4cm} \addtolength{\oddsidemargin}{-2cm}
%\addtolength{\evensidemargin}{-2cm} \textheight=22.15truecm

%\numberwithin{equation}{section}
\setcounter{secnumdepth}{4}

\newtheorem{lemma}{Lemma}[section]

\newtheorem{theorem}[lemma]{Theorem}
\newtheorem{proposition}[lemma]{Proposition}
\newtheorem{remark}[lemma]{Remark}

\newtheorem{example}[lemma]{Example}

\newtheorem{question}[lemma]{Question}

\newcommand{\Ker}{{\rm{Ker}}}

\def\Z{\mathbb Z}
\def\N{\mathbb N}

\def\soc{\mathop{\text{Soc}}}
\def\M{\mathop{\mathcal M}}

\definecolor{turquoise2}{rgb}{0,0.898039,0.933333}
\definecolor{}{rgb}{1,0,1}

\begin{document}

\subjclass[2010]{Primary 16D70; Secondary  16D25, 16E20, 16D30} \keywords{Leavitt path algebra, IBN property, type, socle, extreme cycle, $K_0$.}

\author[M\"uge Kanuni]{M\"uge Kanuni}
\address{M\"uge Kanuni: Department of Mathematics. D\"uzce University, Konuralp 81620 D\"uzce, Turkey}
\email{mugekanuni@duzce.edu.tr}

\author[Dolores Mart\'{\i}n]{Dolores Mart\'{\i}n Barquero}
\address{Dolores Mart\'{\i}n Barquero: Departamento de Matem\'atica Aplicada, Escuela de Ingenier\'{\i}as Industriales, Universidad de M\'alaga. 29071 M\'alaga. Spain.}
\email{dmartin@uma.es}

\author[C\'andido Mart\'{\i}n]{C\'andido Mart\'{\i}n Gonz\'alez}
\address{C\'andido Mart\'{\i}n Gonz\'alez:  Departamento de \'Algebra Geometr\'{\i}a y Topolog\'{\i}a, Fa\-cultad de Ciencias, Universidad de M\'alaga, Campus de Teatinos s/n. 29071 M\'alaga. Spain.}
\email{candido\_m@uma.es}

\author[Mercedes Siles ]{Mercedes Siles Molina}
\address{Mercedes Siles Molina: Departamento de \'Algebra Geometr\'{\i}a y Topolog\'{\i}a, Fa\-cultad de Ciencias, Universidad de M\'alaga, Campus de Teatinos s/n. 29071 M\'alaga.   Spain.}
\email{msilesm@uma.es}

\thanks{
The first author is supported by {D\"uzce University Bilimsel Ara\c{s}t{\i}rma Projesi titled ``Leavitt, Cohn-Leavitt yol cebirlerinin ve C*-\c{c}izge cebirlerinin K-teorisi" with grant no: DUBAP-2016.05.04.462.}
The last three authors are supported by the Junta de Andaluc\'{\i}a and Fondos FEDER, jointly, through projects  FQM-336 and FQM-7156. 
They are also supported by the Spanish Ministerio de Econom\'ia y Competitividad and Fondos FEDER, jointly, through project  MTM2016-76327-C3-1-P.
\newline
This research took place while the first author was visiting the Universidad de M\'alaga. She thanks her coauthors for their hospitality.}

\title[Classification of Leavitt path algebras with two vertices]{Classification of Leavitt path algebras with two vertices}
\begin{abstract} 
We classify row-finite Leavitt path algebras associated to graphs with no more than two vertices. For the discussion we use the following invariants: decomposability, the $K_0$ group,  $\det(N'_E)$ (included in the Franks invariants), the type, as well as the
 socle, the ideal generated by the vertices in cycles with no exits and the ideal generated by vertices in extreme cycles. The starting point is a simple linear algebraic result that determines when a Leavitt path algebra is IBN.
 
 An interesting result that we have found is that the ideal generated by extreme cycles is invariant under any isomorphism (for Leavitt path algebras whose associated graph is finite).
 
 We also give a more specific proof of the fact that the shift move produces an isomorphism when applied to any row-finite graph, independently of the field we are considering.

\end{abstract}

\maketitle

%%%%%%%%%%%%%%%%%%%%%%%%%%%%%%
%%%%%%%%%%%%%%%%%%%%%%%%%%%%%%
\section{Introduction and preliminary results}
%%%%%%%%%%%%%%%%%%%%%%%%%%%%%%
%%%%%%%%%%%%%%%%%%%%%%%%%%%%%%
%%%%%%%%%%%%%%%%%%%%%%%%%%%%%%
%%%%%%%%%%%%%%%%%%%%%%%%%%%%%%
 
In 1960's Leavitt algebras arose from the work of Leavitt on his search for non-IBN algebras 
\cite{L}. The name Leavitt path algebras was associated to this structure, in particular, because 
the Leavitt path algebra on a graph with one vertex and $n$-loops, where $n >1$, is exactly the Leavitt algebra of type $(1,n)$. However, there are a lot of  Leavitt path algebras having IBN. For the definition of the type of a ring see, for example, \cite[Definition 1.1.1]{AAS}. 

The classification problem of Leavitt path algebras (up to isomorphisms) has been present in the literature since the pioneering works \cite{AALP} and \cite{Flow}. The study of the classification of Leavitt path algebras associated to small graphs was started in \cite{atlas}, where the authors considered graphs with at most $3$ vertices satisfying Condition (Sing), i.e, there is at most one edge between two vertices. This work can be also of interest, not only for people studying Leavitt path algebras, but also for a broader audience; concretely, for those working on graph C$^\ast$-algebras (as these are the analytic cousins of Leavitt path algebras). Moreover, one can view Leavitt path algebras as precisely those algebras constructed to produce specified $K$-theoretic data in a universal way, data arising naturally from directed graphs (sic \cite{AAS}), which could make these algebras and results a source of inspiration.

Throughout this paper we mean algebra isomorphism whenever we mention isomorphism. When referring to a ring isomorphism we will specify. In general, when there is a ring isomorphism between two algebras, these are not necessarily isomorphic as algebras. However, we will prove in Proposition \ref{RingVersusAlgebra} that when the center of the Leavitt path algebra is the ground field, then any ring isomorphism gives rise to an algebra isomorphism.

The goal of this article is the classification of Leavitt path algebras with at most two vertices and 
finitely many edges. This study will be initiated 
by fixing our attention in the IBN property, concretely, our starting point is \cite[Theorem 3.4]{KO}, which gives the necessary and sufficient condition that determines when a Leavitt path algebra has the IBN property in terms of a simple linear span of vertices. 
The outlay of the paper is as follows. Section 1 gives  the necessary preliminaries. Moreover, we give a detailed proof of the fact that shift move produces isomorphisms for row-finite graphs (see Theorem \ref{LuisFelipe}). We also prove in Proposition \ref{RingVersusAlgebra} that a ring isomorphism between two Leavitt path algebras whose center is the ground field produces an algebra isomorphism between them.
In Section 2 we compute the type of  Leavitt path algebras not having the IBN property via the criteria given in \cite{KO} and we give a first classification in Figure \ref{tableone}.
Section 3 contains the computation of the $K_0$-groups, which is stated in Figure \ref{tabletwo}. The main section of the paper, Section 4, follows the procedure of the decision tree given in Figure \ref{WeAreHappy} and discusses some algebraic invariants which are listed in Figures \ref{tablethree}, \ref{abanicoMuge} and \ref{tablethreebis}. The core resut, Theorem \ref{lagardeisillaBis}, classifies Leavitt path algebras not having the IBN-property. As a result of our research we prove in Theorem \ref{EnateMerlot-Merlot} that for a finite graph the ideal generated by the vertices in extreme cycles is invariant under ring isomorphisms.

Finally, in Section 5 we classify the Leavitt path algebras having the IBN property and conclude the sequel by addressing an open problem on the isomorphism of Leavitt path algebras over a particular pair of non-isomorphic graphs. In Theorem \ref{tintilladerotaBis} we classify Leavitt path algebras  having the IBN-property; the invariants we use are listed in Figures \ref{muge2} and \ref{muge3}.

\begin{center}\begin{figure}[H]\label{WeAreHappy}
\xygraph{!{<0cm,0cm>;<1.5cm,0cm>:<0cm,1.2cm>::}
!{(0,0)}*+{{\fbox{$\vert E^0\vert =2$}}}="a"
!{(1,1)}*+{\fbox{$\hbox{Soc}\ne 0$}}="b"
!{(1,-1)}*+{\fbox{$\hbox{Soc}= 0$}}="c"
!{(2.7,0)}*+{\fbox{Decomp.}}="d"
!{(2.8,-1)}*+{\fbox{Indecomp.}}="e"
!{(4.4,0)}*+{\fbox{PIS}}="f"
!{(4.7,-1)}*+{\fbox{Non-PIS}}="g"
!{(6.5,0)}*+{\fbox{$P_c\ne\emptyset$}}="h"
!{(6.5,-1)}*+{\fbox{$P_c=\emptyset$}}="i"
!{(8.4,0)}*+{\fbox{$P_{ec}=\emptyset$}}="j"
!{(8.4,-1)}*+{\fbox{$P_{ec}\ne \emptyset$}}="k"
!{(7.7,0.5)}*+{}="l"
!{(9.2,-0.5)}*+{}="m"
!{(9.2,0.5)}*+{}="n"
!{(7.7,-0.5)}*+{}="p"
"a":"b"
"a":"c"
"c":"d"
"c":"e"
"e":"f"
"e":"g"
"g":"h"
"g":"i"
"i":"j"
"i":"k"
"l"-"m"
"n"-"p"
}
\medskip
\caption{Decision tree}
\end{figure}
\end{center}

Throughout the paper, $E = (E^0, E^1, s, r)$ will denote a directed graph with set of vertices $E^0$,  set of edges $E^1$, source function $s$, and range function $r$. In particular, the source vertex of an edge $e$ is denoted by $s(e)$, and the range vertex by $r(e)$. We call $E$ {\it finite}, if both $E^0$ and $E^1$ are finite sets and \emph{row-finite} if $s^{-1}(v)$ is a finite set for all $v\in E^0$.
A {\it sink} is a vertex $v$ for which  $s^{-1}(v) = \{e\in E^1 \mid s(e) = v\}$ is empty. 
For each $e\in E^{1}$, we call $e^{\ast}$ a \textit{ghost edge}. We let
$r(e^{\ast})$ denote $s(e)$, and we let $s(e^{\ast})$ denote $r(e)$. A \textit{path} $\mu$ of length $|\mu|=n>0$ is a finite sequence of edges $\mu
=e_{1}e_{2}\ldots e_{n}$ with $r(e_{i})=s(e_{i+1})$ for all
$i=1,\ldots,n-1$. In this case $\mu^{\ast}=e_{n}^{\ast}\ldots e_{2}^{\ast}e_{1}^{\ast}$ is the corresponding ghost path. A vertex
is considered a path of length $0$. The set of all vertices on the path $\mu$
is denoted by $\mu^{0}$. The set of all paths of a graph $E$ is denoted by ${\rm Path}(E)$. 
 
A path $\mu$ $=e_{1}\dots e_{n}$ in $E$ is \textit{closed} if $r(e_{n})=s(e_{1})$, 
in which case $\mu$ is said to be \textit{based at the vertex
}$s(e_{1})$. A closed path $\mu$ is called \textit{simple} provided that
it does not pass through its base more than once, i.e., $s(e_{i})\neq
s(e_{1})$ for all $i=2,\ldots,n$. The closed path $\mu$ is called a
\textit{cycle} if it does not pass through any of its vertices twice, that is,
if $s(e_{i})\neq s(e_{j})$ for every $i\neq j$.
 
An \textit{exit }for a path $\mu=e_{1}\dots e_{n}$ is an edge $e$ such that
$s(e)=s(e_{i})$ for some $i$ and $e\neq e_{i}$. We say the graph $E$ satisfies
\textit{Condition} (L) if every cycle in $E$ has an exit. We denote by $P_c^E$ ($P_c$ if there 
is no confusion about the graph) the set of vertices of a graph $E$ lying in
cycles without exits.

A cycle $c$ in a graph $E$ is called an \textit{extreme cycle} if $c$ has exits and for every path $\lambda$ starting at a vertex in $c$, there exists $\mu \in {\rm Path}(E)$, such that $0 \ne \lambda \mu$ and $r(\lambda \mu) \in c^0$. A {\it line point} is a vertex $v$ whose tree $T(v)$ does not contain any bifurcations or cycles. 
We will denote by $P^E_l$ the set of all line points, by $P^E_{ ec}$ the set of vertices which belong to extreme cycles, while $P^E_{ lec}:= P^E_l \sqcup P^E_{c} \sqcup P^E_{ ec}$. We will eliminate the superscript $E$ in these sets if there is no ambiguity about the graph.
We refer the reader to the book \cite{AAS} for other definitions and results on Leavitt path algebras.

%no vertex
%on $c$ is the base of another distinct cycle in $E$ (where distinct cycles
%possess different sets of edges and different sets of vertices).

\medskip
 
If there is a path from a vertex $u$ to a vertex $v$, we write $u\geq v$.
A subset $H$ of $E^{0}$ is called \textit{hereditary} if, whenever $v\in H$ and
$w\in E^{0}$ satisfy $v\geq w$, then $w\in H$. A  set $X$ is
\textit{saturated} if, for any  vertex $v$ which is not a sink, $r(s^{-1}(v))\subseteq X$
implies $v\in X$. The set of all hereditary saturated subsets of $E^{0}$ is denoted by
$\mathcal{H}_E$, which is also a partially ordered set by  inclusion. 

Let $K$ be a field, and let $E$ be a  row-finite graph. The {\it Leavitt path $K$-algebra} $L_K(E)$ {\em of $E$ with coefficients in $K$} is  the $K$-algebra generated by the set $\{v\mid v\in E^0\}$, together with  $\{e,e^*\mid e\in E^1\}$, which satisfy the following relations:

(V)  $vw = \delta_{v,w}v$ for all $v,w\in E^0$, \

(E1) $s(e)e=er(e)=e$ for all $e\in E^1$,

(E2)  $r(e)e^*=e^*s(e)=e^*$ for all $e\in E^1$, and

(CK1)  $e^*e'=\delta _{e,e'}r(e)$ for all $e,e'\in E^1$.

(CK2)  $v=\sum _{\{ e\in E^1\mid s(e)=v \}}ee^*$ for every $v\in E^0$ which is not a sink.

It was studied in \cite{KO} the necessary and sufficient conditions for a separated Cohn-Leavitt path algebra to have the Invariant Basis Number (IBN) property. In particular, when a Leavitt path algebra has IBN.  We refer the reader to \cite{AAS} for the definitions of separated graph, separated Cohn-Leavitt path algebra, etc.

The monoid of isomorphism classes of finitely-generated projective modules over a ring $A$ is denoted by $\mathcal{V}(A)$. 
Recall also that $\mathcal{U}(A)$ is the cyclic submonoid of $\mathcal{V}(A)$ generated by the isomorphism class of $A$.
The Grothendieck group of $\mathcal{V}(A)$ is the $K_0$-group of $A$ denoted $K_0(A)$, and by  \cite[Proposition 2.5]{KO}, there is
a monomorphism from the Grothendieck group of $\mathcal{U}(A)$  into $K_0(A)$.

By \cite[Theorem 3.5]{AMP} the abelian monoid $M_E$ associated with a  row-finite graph $E$ is  isomorphic to $\mathcal{V}(L_K(E))$. 
Concretely, when $E$ is finite, the isomorphism class of $L_K(E)$ is mapped to $[\sum_{v\in E^0}v]\in M_E$. Denote it by $[1]_E$.

Note that a Leavitt path algebra $L_K(E)$ which does not have IBN, necessarily has type $(1, m)$ for some natural number $m>1$. The reason is the following:  If $(n, m)$ were the type of $L_K(E)$ for  $1 < m \le n$, then $n[R]=m[R]$ and, by the separativity of the monoid $\mathcal{V}(L_K(E))$  (see \cite[Theorem 3.5 and Theorem 6.3]{AMP}), $(n-1)[R]=(m-1)[R]$, a contradiction to the type of the Leavitt path algebra.

For any  finite graph $E$, we denote by $A_E$ the incidence matrix of $E$. Formally, if $E^0 = \{v_i \mid 1 \leq i \leq n\}$, then  $A_E = (a_{i,j})$ is the $n \times n$ matrix for which $a_{i,j}$ is the number of edges $e$ having $s(e) = v_i$ and $r(e)=v_j$.   In particular, if $v_i \in E^0$ is a sink, then $a_{i,j} = 0$ for all $1\leq j \leq n$, i.e., the $i^{th}$ row of $A_E$ consists of all zeros.   
Following \cite{AB} we write $N_E$ and $1$ for the matrices in ${\mathbb Z}^{(E^0\times E^0\setminus
\text{Sink}(E))}$ obtained from $A^t_E$ and from the identity matrix after removing the columns corresponding
to sinks. Then there is a long exact sequence ($n\in {\mathbb Z}$)

%\medskip

$$\dots \to {\bf K}_n(K)^{(E^0\setminus {\rm Sink}(E))} \overset{1-N_E}{\longrightarrow} {\bf K}_n(K)^{(E^0)}
\longrightarrow {\bf K}_n(L_K(E)) \longrightarrow {\bf K}_{n-1}(K)^{(E^0\setminus {\rm Sink}(E))}.$$

%\medskip

In particular ${\bf K}_0(L_K(E)) \cong \text{coker}(1-N_E: {\mathbb Z}^{(E^0\setminus \text{Sink}(E))} \to
{\mathbb Z}^{(E^0)})$. The effective computation of the $K_0$ group of a given $L_K(E)$ is
explained in \cite[Section 3]{AALP}. 

Note that the $K_0(L_K(E))$ can be computed by obtaining the Smith normal form of the matrix $N'_E:=A_E-1'$, where $1'$ denotes the matrix built from the identity matrix changing the columns corresponding to sinks by columns of zeros. The element $[1]_E$, seen inside  $K_0(L_K(E))$, will be called the \emph{order unit}.

We will use intensively the Smith normal form of a matrix with entries in $\Z$.  Denote by $M_n(\Z)$ the ring of $n\times n$ matrices with integer coefficients. Following \cite{Smith}, for any matrix $A\in M_n(\Z)$  there are invertible matrices $P,Q$ in $M_n(\Z)$ such that $PAQ$ is a diagonal matrix
$PAQ=\text{diag}(d_1,\ldots,d_n)\in M_n(\Z)$, where $d_i\vert d_{i+1}$  and the diagonal entries are unique up to their signs. The diagonal matrix $PAQ$ is called the {\it Smith normal form of $A$}. 
\medskip

For the definition of the shift move we refer the reader to \cite[Definition 2.1]{AALP}. It was shown in \cite[Theorem 2.3]{AALP} that every shift of a graph $E$ produces an epimorphism between the corresponding Leavitt path algebras over a field $K$, which is an isomorphism provided the graph $E$ satisfies Condition (L) or the field $K$ is infinite. This result can be extended to arbitrary fields, and the condition can be eliminated, as the second, third and fourth author mentioned in \cite{atlas} (see page 583) and proved in a condensed way. Here we include a more detailed proof.

\begin{theorem}\label{LuisFelipe}
Let $K$ be an arbitrary field and let $E$ be a row-finite graph. Assume that $F$ is a graph obtained from $E$ by  shift moves. Then $L_K(E)$ and $L_K(F)$ are isomorphic.
\end{theorem}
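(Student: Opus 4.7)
The plan is to upgrade the surjective homomorphism $\Phi:L_K(F)\to L_K(E)$ produced in \cite[Theorem 2.3]{AALP} to an isomorphism for an arbitrary field $K$ and an arbitrary row-finite graph $E$, removing both hypotheses (Condition (L) or $K$ infinite). The reason those hypotheses appear in \cite{AALP} is that the injectivity step there is handled via the Cuntz--Krieger Uniqueness Theorem. My idea is to replace that step with the Graded Uniqueness Theorem (see \cite[Theorem 2.2.15]{AAS}), which is valid for every field and every row-finite graph.

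First I would recall the shift-move construction of \cite[Definition 2.1]{AALP} and write down explicitly, using the universal property of $L_K(F)$, the map $\Phi:L_K(F)\to L_K(E)$ of \cite[Theorem 2.3]{AALP}: each vertex of $F$ is sent to either a vertex of $E$ or to a finite sum of pairwise-orthogonal idempotents of the form $ee^{*}$; each edge of $F$ is sent to an edge (or short path) of $E$; and each ghost edge of $F$ is sent to the corresponding ghost. One then checks, as in \cite{AALP}, that the relations (V), (E1), (E2), (CK1), (CK2) are preserved, so that $\Phi$ is a well-defined $K$-algebra homomorphism, and that it is surjective (generators of $L_K(E)$ are hit by the images of the generators of $L_K(F)$).

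Next I would endow both $L_K(E)$ and $L_K(F)$ with their canonical $\mathbb{Z}$-gradings (vertices in degree $0$, real edges in degree $+1$, ghost edges in degree $-1$) and verify directly from the explicit formulas in the previous step that $\Phi$ is homogeneous of degree $0$: vertices of $F$ go to degree-$0$ elements of $L_K(E)$, real edges to degree-$1$ elements, and ghost edges to degree-$(-1)$ elements. One also verifies that $\Phi(v)\neq 0$ for every vertex $v\in F^{0}$; this is immediate in the cases where $\Phi(v)$ is a vertex of $E$, and for the cases where $\Phi(v)=\sum ee^{*}$ it follows because the summands are non-zero pairwise-orthogonal idempotents of $L_K(E)$.

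Finally, applying \cite[Theorem 2.2.15]{AAS} to $\Phi$, which is a graded $K$-algebra homomorphism out of the Leavitt path algebra of the row-finite graph $F$ that is non-zero on every vertex, we conclude that $\Phi$ is injective. Together with its surjectivity, this gives $L_K(F)\cong L_K(E)$, uniformly in $K$ and without any restriction on cycles of $E$. The only real work, and the place where the proof could go wrong, is the careful bookkeeping in the middle step: one must verify that the specific formulas assigning generators of $L_K(F)$ to elements of $L_K(E)$ land in the correct homogeneous components and that all vertex images are non-zero. Once that is done, the Graded Uniqueness Theorem closes the argument automatically.
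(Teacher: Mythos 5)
There is a genuine gap, and it sits exactly where you suspected: the bookkeeping in the middle step fails, because the homomorphism attached to a shift move is \emph{not} graded for the canonical $\mathbb{Z}$-gradings. A shift at an edge $e\colon v\to w$ deletes $e$ and introduces, for each $f\in s^{-1}(w)$, a new edge $e_f\colon v\to r(f)$; the associated homomorphism sends $e_f$ to the length-two path $ef$ (equivalently, in the opposite direction it sends $e$ to $\sum_{f\in s^{-1}(w)}e_ff^*$). Thus a degree-$1$ generator is sent to a degree-$2$ element (resp.\ to a degree-$0$ element), and your own description already betrays this: ``each edge of $F$ is sent to an edge \emph{(or short path)} of $E$'' is incompatible with ``real edges to degree-$1$ elements.'' Since the map is not homogeneous of degree $0$, the Graded Uniqueness Theorem \cite[Theorem 2.2.15]{AAS} cannot be invoked as stated, and the injectivity step collapses. (If every generator really did go to a generator of the same degree, the two graphs would essentially be isomorphic and there would be nothing to prove.)

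The idea is not beyond repair, but it needs a further ingredient you did not supply: one can regrade $L_K(F)$ by the weight function giving the new edges $e_f$ degree $2$ and all other edges degree $1$; then $\varphi(e)=\sum_f e_ff^*$ becomes homogeneous of degree $1$ and the epimorphism $\varphi\colon L_K(E)\to L_K(F)$ \emph{is} graded from the canonically graded $L_K(E)$ into this regraded ring, so the graded uniqueness theorem (whose hypothesis concerns only the canonical grading of the domain) would apply. You would have to check that this weight function genuinely induces a $\mathbb{Z}$-grading and keep the map in the direction $L_K(E)\to L_K(F)$ (with the domain canonically graded), neither of which is in your write-up. The paper takes an entirely different route: it tensors the epimorphism $\varphi$ with the algebraic closure $\overline{K}$, identifies $L_K(E)\otimes\overline{K}\cong L_{\overline{K}}(E)$, uses that the shift map is already known to be an isomorphism over the infinite field $\overline{K}$, and concludes $\mathrm{Ker}(\varphi)\otimes\overline{K}=0$, hence $\mathrm{Ker}(\varphi)=0$.
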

\begin{proof}
Let $\varphi: L_K(E)\to L_K(F)$  be the $K$-algebra epimorphism defined in \cite[Theorem 2.3]{AALP}. Take $\overline K$, the algebraic closure of $K$, and consider the $K$-algebra homomorphism
$
\varphi\otimes 1_{\overline K}: L_K(E) \otimes  \overline K \to  L_K(F) \otimes \overline K
$, where $1_{\overline K}$ is the identity from $\overline K$ into $\overline K$. Since $\varphi$ and $1_{\overline K}$ are epimorphisms, then by \cite[Theorem 7.7]{SZE} the map $\varphi\otimes 1_{\overline K}$ is an epimorphism. The same result states that the kernel of $\varphi\otimes 1_{\overline K}$ is generated by $L_K(E) \otimes {\rm Ker}(1_{\overline K})\cup {\Ker (\varphi}) \otimes \overline K$; in fact, by ${\Ker (\varphi}) \otimes \overline K$, since ${\Ker}(1_{\overline K})=0$.

By \cite[Corollary 1.5.14]{AAS} we have that $L_{\overline K}(E)$ and $L_{\overline K}(F)$ are isomorphic to $L_K(E) \otimes  \overline K$ and to $L_K(F) \otimes \overline K$, respectively, via isomorphisms that we will denote by $\alpha$ and $\beta$, respectively. Therefore, there exists a unique $K$-algebra homomorphism $\overline \varphi$ that makes  the following diagram commute.
$$\xymatrix{ 
L_K(E)\otimes \overline K  \ar[dd]_(.6){\alpha}    \ar[rr]^{\varphi \otimes 1_{\overline K}} &                                                       & L_K(F)\otimes \overline K  \ar[dd]^(.6){\beta}           &                              \\ 
                                                                          & &                                                                &   \\
L_{\overline K}(E) \ar[rr]_{\overline\varphi}                                          &                                                        & L_{\overline K}(F)                                                        &
}
$$

Note that $\overline\varphi$ is just the $\overline K$-algebra homomorphism given in \cite[Theorem 2.3]{AALP}. Since $\overline K$ is an infinite field, this result states that $\overline \varphi$ is an isomorphism. By the commutativity of the diagram, the map $\varphi \otimes 1_{\overline K}$ is an isomorphism, therefore $\Ker (\varphi) \otimes \overline K= 0$. This implies $\Ker (\varphi)=0$, as required.
\end{proof}

We finish this section by including two results on isomorphisms which will be used in the sequel.

\begin{proposition}\label{RingVersusAlgebra}
 Let $E$ be a graph such that the center of $L_K(E)$ is isomorphic to $K$ (which implies that $E$ is a finite graph), and let $F$ be another graph. Then there is a ring isomorphism
$L_K(E)\to L_K(F)$ if and only if there is an algebra isomorphism
$L_K(E)\to L_K(F)$.
\end{proposition}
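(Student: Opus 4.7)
The direction \emph{algebra iso $\Rightarrow$ ring iso} is trivial, so let $\phi\colon L_K(E)\to L_K(F)$ be a ring isomorphism. Since ring isomorphisms between unital rings preserve the unit and the center, $\phi(1_E)=1_F$ and $\phi$ restricts to a ring isomorphism $Z(L_K(E))\to Z(L_K(F))$. By hypothesis $Z(L_K(E))=K\cdot 1_E$, so $Z(L_K(F))$ is a field abstractly isomorphic to $K$. Moreover, since $L_K(F)$ inherits a unit through $\phi$, the set $F^0$ is finite.

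The crux of the argument is to establish that $Z(L_K(F))=K\cdot 1_F$, i.e.\ the center coincides with the canonical copy of the ground field. Here one invokes the explicit description of the center of a Leavitt path algebra associated to a finite graph: beyond the scalar multiples of $1_F$, the center is generated by elements arising from cycles without exits (yielding Laurent-polynomial summands $K[x,x^{-1}]$) and from extreme cycles. If any nonscalar central element were present, $Z(L_K(F))$ would contain a copy of $K[x,x^{-1}]$ or a nontrivial product of fields, and in either case it could not be a field abstractly isomorphic to $K$. Hence $Z(L_K(F))=K\cdot 1_F$.

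Once this is secured, $\phi$ restricts to a ring isomorphism $K\cdot 1_E\to K\cdot 1_F$, i.e.\ a field automorphism $\sigma$ of $K$. To convert $\phi$ into a $K$-linear map, one corrects it by an appropriate semilinear automorphism of the target. Because the defining relations (V), (E1), (E2), (CK1), (CK2) of $L_K(F)$ involve only the coefficients $0,1,-1$, the universal property allows us to extend $\sigma^{-1}$ to a ring automorphism $\widetilde\sigma\colon L_K(F)\to L_K(F)$ which fixes every vertex, edge, and ghost edge and acts as $\sigma^{-1}$ on scalars. Then $\widetilde\sigma\circ\phi\colon L_K(E)\to L_K(F)$ is a ring isomorphism whose restriction to $K\cdot 1_E$ is $\sigma^{-1}\circ\sigma=\mathrm{id}$; being $K$-linear, it is the desired algebra isomorphism.

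The main obstacle is the middle step: the identification $Z(L_K(F))=K\cdot 1_F$ is not a formal consequence of $Z(L_K(F))\cong K$ as rings, because a field $K$ may in general admit proper subfields abstractly isomorphic to itself. One must genuinely use the structural description of centers of Leavitt path algebras of finite graphs, where the only rings arising as centers are, up to isomorphism, finite products of polynomial or Laurent-polynomial extensions of $K$ together with the base field, so that the condition of being a field isomorphic to $K$ forces the trivial situation $K\cdot 1_F$.
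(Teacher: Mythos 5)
Your proof is correct and follows the same overall strategy as the paper: extract a field automorphism $\sigma$ of $K$ from the restriction of the ring isomorphism to the centers, and then post-compose with a $\sigma^{-1}$-semilinear ring automorphism of $L_K(F)$ so that the composite becomes $K$-linear. There are two points of divergence worth recording. First, the paper builds the correcting map $\psi$ coefficient-wise from a basis of $L_K(F)$ whose structure constants lie in $\{0,\pm1\}$ (\cite[Corollary 1.5.12]{AAS}), whereas you obtain it from the universal property, using that the defining relations (V), (E1), (E2), (CK1), (CK2) have coefficients in the prime field; these are essentially equivalent and both are valid. Second, and more substantively, you isolate and justify a step the paper's proof leaves implicit: writing $f(k1)=\sigma(k)1$ presupposes that $Z(L_K(F))$ equals the canonical copy $K\cdot 1_F$ and not merely that it is a field abstractly isomorphic to $K$ (which, as you note, would not suffice, since a field can admit proper extensions isomorphic to itself). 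Your appeal to the structural description of centers of Leavitt path algebras of finite graphs --- a finite product of copies of $K$ and $K[x,x^{-1}]$ as $K$-algebras, so that the field case forces $K$-dimension one --- closes this gap cleanly; just note that the $K$-summands also arise from sinks/line points, not only from extreme cycles, though this does not affect the conclusion.
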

\begin{proof}
Assume that $f\colon L_K(E)\to L_K(F)$ is a ring isomorphism.  We can restrict the map $f$ to $f\vert_{Z(L_K(E))}\colon Z(L_K(E))\to Z(L_K(F))$, where $Z( \cdot)$ denotes the center of the algebra, to get an automorphism
$\sigma\colon K\to K$ such that $f(k 1)=\sigma(k) 1$ for any $k\in K$. We can say that $f$ is $\sigma$-linear in the sense that 
$f(k x)=\sigma(k)f(x)$ for any $k\in K$ and $x\in L_K(E)$. 

Now, by \cite[Corollary 1.5.12]{AAS}, we may fix a basis
$\{w_i\}_{i\in \Lambda}$ of $L_K(F)$ whose structure constants are $0,1,-1$. Assume $w_i w_j=\sum_l c_{ij}^l w_l$ where $c_{ij}^l\in\{0,\pm 1\}$. Define 
$\psi\colon L_K(F)\to L_K(F)$ by $\psi(\sum_i k_i w_i):=\sum_i\sigma^{-1}(k_i)w_i$, where  $k_i\in K$. This map is a $\sigma^{-1}$-linear bijective map and
$\psi(w_i w_j)=\psi(\sum_l c_{ij}^l w_l)=\sum_l c_{ij}^l w_l=w_iw_j=\psi(w_i)\psi(w_j)$. From this, we deduce that $\psi(xy)=
\psi(x)\psi(y)$ for any $x,y\in L_K(F)$. Thus the composition $\psi f$ is a $K$-linear isomorphism from $L_K(E)$ to $L_K(F)$.
\end{proof}

\begin{remark}
\rm
Although we have stated Proposition \ref{RingVersusAlgebra} for Leavitt path algebras, because we are in this setting, the result is more general: it is true for arbitrary $K$-algebras having center isomorphic to $K$ and a basis with structure constants in the prime field of $K$.
\end{remark}

\begin{proposition}\label{uzo}
Let $m$ and $n$ be natural numbers. Then, $M_\infty(L_K(1, m))$ is isomorphic to $M_\infty(L_K(1, n))$ if and only if $m=n$.
\end{proposition}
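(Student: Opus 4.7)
The ``if'' direction is immediate, so the content lies in the converse. My plan is to separate the two algebras by their $K_0$-groups, using the fact that the $M_\infty$ construction is Morita invariant and that $K_0$ of a rose graph is easy to compute from the data recalled in the preliminaries.

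First, I would invoke the standard fact that for any ring $A$ with local units, the ring $M_\infty(A)$ of $\mathbb{N}\times\mathbb{N}$ matrices over $A$ with only finitely many nonzero entries is Morita equivalent to $A$, via the bimodule of finitely supported column vectors; in particular $K_0(M_\infty(A))\cong K_0(A)$. Applying this to $A=L_K(1,m)$ and $A=L_K(1,n)$, any ring isomorphism $M_\infty(L_K(1,m))\cong M_\infty(L_K(1,n))$ forces
\[
K_0(L_K(1,m))\cong K_0(L_K(1,n)).
\]

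Next, I would compute these groups directly from the prescription in the preliminaries. The algebra $L_K(1,m)$ is $L_K(R_m)$, where $R_m$ is the graph with a single vertex and $m$ loops. Its incidence matrix is $A_E=(m)$, so $N'_E=A_E-1'=(m-1)$, which is already in Smith normal form. Therefore
\[
K_0(L_K(1,m))\cong\operatorname{coker}\bigl((m-1)\colon\mathbb{Z}\to\mathbb{Z}\bigr)\cong\mathbb{Z}/(m-1)\mathbb{Z}.
\]
This group is infinite cyclic when $m=1$ and cyclic of order $m-1$ when $m\geq 2$; in all cases, $m$ is recoverable from the isomorphism class of $K_0(L_K(1,m))$.

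Combining the two steps, an isomorphism $M_\infty(L_K(1,m))\cong M_\infty(L_K(1,n))$ yields $\mathbb{Z}/(m-1)\mathbb{Z}\cong\mathbb{Z}/(n-1)\mathbb{Z}$, whence $m-1=n-1$ and $m=n$. The only mildly delicate point is that $M_\infty(A)$ is not unital, so Morita equivalence must be understood in the framework of rings with local units: $M_\infty(A)$ carries the increasing sequence of local units $e_N=\sum_{i=1}^{N}E_{ii}(1_A)$, and the equivalence between the category of unital $M_\infty(A)$-modules and the category of $A$-modules descends to the desired identification of $K_0$-groups. This is the main (but standard) obstacle; once it is in place, the rest of the argument is a one-line computation of $K_0$ of the rose.
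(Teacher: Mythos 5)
Your proof is correct and follows essentially the same route as the paper: both arguments reduce the problem to the isomorphism $K_0(M_\infty(L_K(1,m)))\cong K_0(L_K(1,m))\cong\Z_{m-1}$ and then compare the resulting cyclic groups. The only cosmetic difference is that the paper realizes the Morita equivalence by transporting the full corner idempotent $E_{11}$ through the given isomorphism, whereas you invoke the general Morita equivalence of $M_\infty(A)$ with $A$ for rings with local units directly.
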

\begin{proof}
Assume that there is an isomorphism $\varphi: M_\infty(L_K(1, m)) \to M_\infty(L_K(1, n))$.
Let $e\in M_\infty(L_K(1, m))$ be the matrix having 1 in place 1,1  and zero everywhere else and let $e':=\varphi(e)$. Then
$L_K(1, m) \cong e M_\infty(L_K(1, m)) e \cong e'M_	\infty(L_K(1, n))e'$, which is Morita equivalent to $L_K(1, n)$. This implies that $L_K(1, m)$ is Morita equivalent to $L_K(1, n)$ and, consequently, their $K_0$ groups are isomorphic. Since the first one is isomorphic to $\Z_{m-1}$ and the second one is isomorphic to $\Z_{n-1}$, necessarily $m=n$.
\end{proof}

%%%%%%%%%%%%%%%%%%%%%%%%%%%%%%
%%%%%%%%%%%%%%%%%%%%%%%%%%%%%%
\section{Computation of the type of Leavitt path algebras not having IBN}
%%%%%%%%%%%%%%%%%%%%%%%%%%%%%%
%%%%%%%%%%%%%%%%%%%%%%%%%%%%%%
%%%%%%%%%%%%%%%%%%%%%%%%%%%%%%
%%%%%%%%%%%%%%%%%%%%%%%%%%%%%%
In this section we will determine all Leavitt path algebras not having the IBN property and compute their types in terms of the number of edges of the associated graphs. 

We start by quoting  \cite[Theorem 3.4]{KO}, which gives a necessary and sufficient condition for the algebra to have the IBN property in the more general setting of separated Cohn-Leavitt path algebras. 
\begin{theorem}\label{MainThmforseparatedLPA} 
For a given triple $(E, \Pi,\Lambda)$, with $E$ finite, let $L$ denote the separated Cohn-Leavitt path algebra
$CL_K(E, \Pi, \Lambda)$ over the triple. Then   
$L$ is IBN if and only if $ \sum_{v \in E^0}v$  is not in the $\mathbb{Q}$-span of the relations 
$\{ sX - \sum_{e \in X}r(e)\}_{ X \in \Lambda}$  in $\mathbb{Q} E^0$.
\end{theorem}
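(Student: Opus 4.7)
The plan is to translate the IBN property into a statement about the additive order of a distinguished element of $K_0(L)$, and then read off that order from linear algebra over $\mathbb{Q}$.

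First I would invoke the analogue of \cite[Theorem 3.5]{AMP} valid for separated Cohn--Leavitt path algebras, which realises $\mathcal{V}(L)$ as the abelian monoid with generators $E^0$ and defining relations $sX=\sum_{e\in X}r(e)$ for $X\in\Lambda$. Under this identification, the class of the rank-one free module $L$ corresponds to $[1]_L:=\sum_{v\in E^0}v$. By definition, $L$ has IBN exactly when the cyclic submonoid $\mathcal{U}(L)\subseteq\mathcal{V}(L)$ generated by $[1]_L$ is isomorphic to $\mathbb{N}$; equivalently, when $n[1]_L=m[1]_L$ in $\mathcal{V}(L)$ forces $n=m$.

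Next I would pass from $\mathcal{V}(L)$ to $K_0(L)$. By \cite[Proposition 2.5]{KO} the Grothendieck group of $\mathcal{U}(L)$ embeds into $K_0(L)$. Since that Grothendieck group is cyclic, it is either $\mathbb{Z}$ (precisely when $\mathcal{U}(L)\cong\mathbb{N}$) or a finite cyclic group, so IBN is equivalent to the statement that $[1]_L$ has infinite order in $K_0(L)$. To make this usable I would identify $K_0(L)$ concretely: being the group completion of $\mathcal{V}(L)$ under the above presentation, it equals $\mathbb{Z} E^0$ modulo the subgroup $R$ generated by $\{sX-\sum_{e\in X}r(e):X\in\Lambda\}$. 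An element $x\in\mathbb{Z} E^0$ has infinite order modulo $R$ if and only if its image in the $\mathbb{Q}$-vector space $\mathbb{Q} E^0/(R\otimes\mathbb{Q})$ is nonzero, that is, if and only if $x$ does not lie in the $\mathbb{Q}$-span of the relations. Applying this with $x=\sum_{v\in E^0}v$ yields the stated criterion.

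The substantive input is the combination of the realisation of $\mathcal{V}(L)$ as this combinatorial monoid (a non-trivial Bergman-type theorem adapted to the separated Cohn--Leavitt setting) with the embedding of the Grothendieck group of $\mathcal{U}(L)$ into $K_0(L)$; without the latter, a failure of IBN visible inside $\mathcal{V}(L)$ could a priori be invisible in $K_0(L)$. Once both facts are available, the remainder is a routine passage to the group completion together with the elementary fact that torsion in a finitely presented abelian group is detected after tensoring with $\mathbb{Q}$.
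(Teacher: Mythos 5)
The paper does not actually prove this statement: it is quoted verbatim from \cite[Theorem 3.4]{KO}, so there is no in-paper proof to compare against. Your argument is nevertheless correct and is essentially the proof given in the cited source: realise $\mathcal{V}(L)$ via the Ara--Goodearl-type presentation with generators $E^0$ and relations $sX=\sum_{e\in X}r(e)$ for $X\in\Lambda$, reduce IBN to $[1]_L$ having infinite additive order in $K_0(L)$ using \cite[Proposition 2.5]{KO}, identify $K_0(L)$ with $\mathbb{Z}E^0$ modulo the relation subgroup $R$, and detect torsion rationally. One small slip in your closing remark: the embedding of the Grothendieck group of $\mathcal{U}(L)$ into $K_0(L)$ is needed for the opposite implication from the one you describe. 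A relation $n[1]_L=m[1]_L$ in $\mathcal{V}(L)$ with $n\neq m$ always forces $(m-n)[1]_L=0$ in $K_0(L)$, so a failure of IBN is automatically visible in $K_0$; what could a priori go wrong without the embedding is that $[1]_L$ is torsion in $K_0(L)$ (i.e.\ $\sum_v v$ lies in the $\mathbb{Q}$-span of the relations) while no actual isomorphism $L^n\cong L^m$ exists, which would break the implication ``IBN $\Rightarrow$ not in the $\mathbb{Q}$-span.'' This does not affect the validity of the proof itself, only the explanatory sentence.
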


If the Leavitt path algebra has type $(1,m)$, for some natural $m>1$, then 
\begin{equation}\label{sardonia}
[1]= m[1],
\end{equation}
 where $m$ is the minimum natural number satisfying this property.
\medskip

Before we move on to the two-vertex graphs, for completeness of the argument, we state the easy case of one-vertex graphs. The Leavitt path algebras associated to one-vertex graphs are isomorphic either to the ground field $K$ or to the Laurent polynomial algebra $K[x,x^{-1}]$, which have the IBN property, or to the Leavitt algebras $L(1,n)$, with $n>1$, which do not have the IBN property and are of type $(1,n)$. 

\begin{figure}[h]
%\resizebox{17cm}{!}{
\begin{tabular}{|cc|c|c|c|}
\hline
 Graph & & $L_K(E)$ & IBN & Type \\
\hline
&&&&\\
$$ \xymatrix{
{\bullet}^u   }
$$ & & $K$ & YES & -\\
&&&&\\

 $$ \xymatrix{
{\bullet}^u\ar@(ur,dr)^{(1)}   }
$$& & $K[x,x^{-1}]$ & YES & - \\
&&&&\\
$$ \xymatrix{
{\bullet}^u\ar@(ur,dr)^{(n)}  }
$$& $ n\ge 2$ & $ L(1,n)$& NO &$(1,n)$\\
&&&&\\

\hline
\end{tabular}
\caption{All possible one-vertex Leavitt path algebras}\label{muge1}
\end{figure}

\bigskip

Let us consider a finite graph $E$ with two vertices and assume $l_1, l_2, t_1, t_2\in \N=\{0, 1, 2, \dots\}$ are the number of arrows appearing in the graph, that is,

$$ \xymatrix{
{\bullet}^u  \ar@(ul,dl)_{(l_1)} \ar@/^-.5pc/[r]_{(t_1)}   & {\bullet}^v
\ar@(ur,dr)^{(l_2)}    \ar@/_.5pc/[l]_{(t_2)}}
$$

Now, consider the set $\N\times \N$ and identify $u$ with $(1, 0)$ and $v$ with $(0, 1)$. 
According  to the number of  sinks in $E$, we have several different relations  in the monoid $M_E$.
If all the vertices are sinks, the graph consists of two isolated vertices and its Leavitt path algebra is $K\times K$ which has clearly the IBN property. 
So we  only consider the  two cases below.

%However, if there is only one regular vertex, we have only a relation. If say, $u$ is a sink, then we get 
%\begin{equation}\label{onlyone}
%v = t_2 u +l_2 v
%\end{equation}

%\subsection{Two sinks case} 
%This case has no interest since the graph consists of two isolated vertices and the associated Leavitt path algebra  is IBN. So we would not consider this trivial case.

\subsection{One sink case} \label{OneSinkCase} Without loss of generality, let $u$ be the sink  so that the graph looks like:

$$ \xymatrix{
{\bullet}^u   & {\bullet}^v
\ar@(ur,dr)^{(l_2)}    \ar@/_.5pc/[l]_{(t_2)}}
$$
\medskip

Since there is only one  vertex which is not a sink, we have: 
\begin{equation}\label{onlyone}
v = t_2 u +l_2 v.
\end{equation}

\noindent Then $M_E$ is identified with 
$$\N\times\N\Big\slash\langle (0,1)=(t_2,l_2) \rangle$$ 

\noindent and we get  the equivalence relation generated by the pair % \eqref{onlyone} ,yields only one relation:

\begin{equation}\label{onlyone}
 (t_2, l_2-1).
\end{equation}

A consequence of Theorem \ref{MainThmforseparatedLPA} is that the algebra $L_K(E)$ has not the IBN property and is of type $(1,m)$, $m>1$, if and only if $(m-1,m-1)$ is in the integer span of the pair in \eqref{onlyone}. In other words,
if and only if there is a nonzero natural number $k$ such that 
\begin{equation}\label{lacasera}
\begin{cases}m-1=k\ (l_2-1)\\ m-1=k\ t_2.\end{cases}
\end{equation}
We will split the discussion of the solution of this system into  two cases:
\begin{enumerate}
\item[Case 1.]  If $t_2=0$ or $l_2=1$ then the system is inconsistent  (it has no solution) and $L_K(E)$ has IBN. More precisely: when $t_2=0$, then $L_K(E)$ is isomorphic to $K \times K[x, x^{-1}]$ when $l_2=1$ or to $K \times L(1, l_2)$ when $l_2\neq 1$ and in every case it is an IBN algebra.
\item[Case 2.] If $t_2\ne 0$ and $l_2\ne 1$ then:
\begin{enumerate}
\item If $t_2\ne l_2-1$, then the system is again inconsistent and $L_K(E)$ is IBN.
\item If $t_2= l_2-1$, then  $m-1=k\ t_2$ and the minimum solution is $m=1+t_2=l_2$, so $L_K(E)$ has not IBN and type $(1,l_2)$.
\end{enumerate}
\end{enumerate}

Summarizing the results of the one sink case, we get the following lemma. 

\begin{lemma} \label{oporto} Let $K$ be a field and $E$ be a graph with two vertices having exactly one sink. 
Then $L_K(E)$ has not IBN  if and only if $E$ is of the form  
$$ \xymatrix{
{\bullet}^u   & {\bullet}^v
\ar@(ur,dr)^{(n)}    \ar@/_.5pc/[l]_{(n-1)}}
$$  
where $n\ge 2$. Furthermore, $L_K(E)$ has not IBN and has type $(1,n)$.
\end{lemma}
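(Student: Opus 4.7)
The plan is to read off the lemma directly from the monoid analysis just carried out. Given Theorem \ref{MainThmforseparatedLPA} and the identification of $M_E$ via the single relation $(t_2, l_2-1)$, failure of IBN is equivalent to the existence of integers $k\geq 1$ and $m>1$ satisfying the system \eqref{lacasera}. So I would simply collate the cases that produce non-IBN behaviour.

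First I would dispense with the degenerate values. If $t_2=0$ then the second equation of \eqref{lacasera} gives $m-1=0$, contradicting $m>1$; likewise if $l_2=1$ the first equation forces $m=1$. In both situations $L_K(E)$ splits as $K\times K[x,x^{-1}]$ or $K\times L(1,l_2)$ (or $K\times K$ in the trivial subcase), all of which are IBN, so these graphs are excluded from the non-IBN list. Assuming then $t_2\neq 0$ and $l_2\neq 1$, dividing the two equations in \eqref{lacasera} shows that the system is consistent precisely when $l_2-1=t_2$; any other choice is ruled out since $k$ is a common positive factor. When $l_2-1=t_2$, the minimum positive solution is $k=1$, yielding $m=1+t_2=l_2$; writing $n:=l_2\geq 2$ this is exactly the graph displayed in the statement, and the type is $(1,n)$.

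I do not expect any serious obstacle: the content is essentially a bookkeeping step that converts the case analysis in \ref{OneSinkCase} into a clean if-and-only-if. The only point to be careful about is to verify that the extracted minimum $m=l_2$ really satisfies $m>1$ (which holds since $n\geq 2$ forces $t_2=n-1\geq 1$), so that the relation \eqref{sardonia} produces a genuine non-IBN type and not a triviality.
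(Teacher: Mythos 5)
Your proposal is correct and follows essentially the same route as the paper: both reduce the question via Theorem \ref{MainThmforseparatedLPA} to the solvability of the system \eqref{lacasera}, discard the cases $t_2=0$ or $l_2=1$, observe that consistency forces $t_2=l_2-1$, and extract the minimal solution $m=l_2=n$ to obtain type $(1,n)$. No gaps.
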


\subsection{No sink case}\label{Eristoff}

If both $u$ and $v$ are  not sinks, then we have the following relations
\begin{equation}\label{muga}
\begin{aligned}
u &= l_1 u + t_1 v,\\
v&= t_2 u +l_2 v.
\end{aligned}
\end{equation}

%Now, consider the set $\N\times \N$ and identify $u$ with $(1, 0)$ and $v$ with $(0, 1)$.  
Then $M_E$ can be identified with 
$$\N\times\N\Big\slash \langle (1, 0) =(l_1, t_1), (0, 1)= (t_2, l_2) \rangle$$

%\begin{equation}%\label{carraovejas}
%\begin{aligned}
%(1, 0) &=(l_1, t_1)\\
%(0, 1)&= (t_2, l_2)
%\end{aligned}
%\end{equation}

and the equivalence relation is the one generated by the pairs 
%and \eqref{onlyone} gives

\begin{equation}\label{geva}
(l_1-1, t_1), \quad (t_2, l_2 -1).
\end{equation}

Using Theorem \ref{MainThmforseparatedLPA} we can affirm that if there exists $m\in \N$, $m>1$ such that \eqref{sardonia} is satisfied, then $(m, m)-(1, 1)$ is in the $\Z$-span of the relations given in \eqref{geva}, that is, there exist $k_1, k_2 \in \Z$ such that

\begin{equation}\label{matarromera}
\begin{aligned}
\begin{cases}
m-1 &= k_1(l_1-1) + k_2 t_2\\
m-1&= k_1t_1 + k_2(l_2-1).
\end{cases}
\end{aligned}
\end{equation}

Our aim is to find the minimum value of $m\in \N$ satisfying the system above, if it exists.
\medskip

We may assume that $(l_i, t_i)\ne (0,0)$ for  any $i=1,2$ (otherwise  the graph has a sink and this case has been considered already).
There are also some particular cases to consider:
\begin{enumerate}
\item[{\bf Case 1.}] $(l_i,t_i)=(1,0)$ for any $i=1,2$. The associated Leavitt path algebra is isomorphic to $K[x,x^{-1}]\times K[x,x^{-1}]$ which has IBN.
\item[{\bf Case 2.}]  Without loss of generality we may assume $(l_1,t_1)=(1,0)$ but $(l_2,t_2)\ne (1,0)$.  The graph is
$$ \xymatrix{
{\bullet}^u  \ar@(ul,dl)   & {\bullet}^v
\ar@(ur,dr)^{(l_2)}    \ar@/_.5pc/[l]_{(t_2)}}
$$
\smallskip

and the system transforms to
$$\begin{aligned}
\begin{cases}
m-1 &=  k_2 t_2\\
m-1&=  k_2(l_2-1),
\end{cases}
\end{aligned}
$$
\noindent which is the same  system as \eqref{lacasera}. Consequently, the algebra does not have IBN and has type $(1,l_2)$ if and only if $t_2=l_2-1$.
\end{enumerate}
\medskip

Here, we get another class of  Leavitt path algebras not having  IBN and we note the result as the following lemma. 
\begin{lemma} \label{ribeiro} Let $K$ be a field and $E$ be a graph of the form
 $$ \xymatrix{
{\bullet}^u  \ar@(ul,dl)_{(1)}   & {\bullet}^v
\ar@(ur,dr)^{(l_2)}    \ar@/_.5pc/[l]_{(t_2)}}
$$
where $l_2\ge 2$.
Then $L_K(E)$ does not have IBN if and only if $t_2=l_2-1$. In this case, $L_K(E)$ has type  $(1,l_2)$.  
\end{lemma}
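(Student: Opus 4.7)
The plan is essentially to specialize the general system \eqref{matarromera} to the hypothesis $(l_1,t_1)=(1,0)$ and then recognize the resulting system as the one already analysed in the one-sink case of Subsection \ref{OneSinkCase}.

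First I would invoke Theorem \ref{MainThmforseparatedLPA}: $L_K(E)$ fails to have IBN precisely when $\sum_{w\in E^0}w=u+v$ is a positive integer multiple of itself modulo the relations coming from the $(CK2)$-identities at $u$ and $v$; equivalently, as recorded in \eqref{sardonia}, there is some natural number $m>1$ with $m[1]=[1]$ in $M_E$, which by \eqref{matarromera} is the same as asking for the existence of integers $k_1,k_2$ satisfying the two equations displayed there.

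Substituting $l_1=1$ and $t_1=0$ kills the first generator in \eqref{geva} (it becomes $(0,0)$), so the coefficient $k_1$ drops out and the system collapses to
\begin{equation*}
\begin{cases} m-1=k_2 t_2,\\ m-1=k_2(l_2-1),\end{cases}
\end{equation*}
which is formally identical to the system \eqref{lacasera} from the one-sink case. From the analysis there (together with the hypothesis $l_2\ge 2$, which rules out the degenerate $l_2=1$ subcase), this system has a positive solution $m>1$ if and only if $t_2=l_2-1$, in which case the smallest such $m$ is $m=1+t_2=l_2$. Thus $L_K(E)$ fails IBN exactly when $t_2=l_2-1$ and, in that event, has type $(1,l_2)$.

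There is essentially no obstacle, since the hard work has already been done: the derivation of the system \eqref{matarromera} via Theorem \ref{MainThmforseparatedLPA} and the case analysis of \eqref{lacasera} in Lemma \ref{oporto}. The only point requiring a small sanity check is that the resulting type really is $(1,l_2)$ and not something smaller, which follows from $m=l_2$ being the minimum positive solution of $m-1=k_2(l_2-1)$ with $k_2\in\N$, $k_2\ge 1$.
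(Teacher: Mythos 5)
Your proposal is correct and follows essentially the same route as the paper: the paper's Case 2 of Subsection \ref{Eristoff} likewise substitutes $(l_1,t_1)=(1,0)$ into \eqref{matarromera}, observes that the system collapses to \eqref{lacasera}, and reads off the conclusion from the one-sink analysis, obtaining the minimum $m=1+t_2=l_2$ exactly as you do.
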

%\textcolor{magenta}{Modify and check the non-zero of ts in general.}

\begin{enumerate}
\item[{\bf Case 3.}] $(l_i,t_i)\ne (1,0)$ for any $i=1,2$. 
\end{enumerate}

\noindent
{\bf Case 3a}. If $l_i-1=t_i$ for some $i$. Without loss of generality, assume $l_1-1=t_1$. From \eqref{matarromera} we get $0=k_2(t_2-l_2+1)$. 

\noindent
{\bf Case 3a (i).} If
$t_2=l_2-1$, then we have $m-1=k_1t_1+k_2t_2$,  therefore $m-1=k\ \gcd(t_1,t_2)$ and the minimum solution  (in $\N$) is $m=1+\gcd(t_1,t_2)$.  We get a Leavitt path algebra not having IBN and of type $(1,1+\gcd(t_1,t_2))$. 
\medskip

We state this result in the following lemma.

\begin{lemma} \label{madeira} Let $K$ be a field and $E$ be a graph of the form
 $$ \xymatrix{
{\bullet}^u  \ar@(ul,dl)_{(t_1+1)} \ar@/^-.5pc/[r]_{(t_1)}  & {\bullet}^v
\ar@(ur,dr)^{(t_2+1)}    \ar@/_.5pc/[l]_{(t_2)}}
$$
where  $t_1,t_2\ge 1$. Then $L_K(E)$ does not have IBN and has type $(1,1+gcd(t_1,t_2))$. 
\end{lemma}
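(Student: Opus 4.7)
The plan is to apply the framework of Section \ref{Eristoff} (no sink case) directly, since the graph in Lemma \ref{madeira} falls under Case 3a (i). Concretely, I would note that with $l_1 = t_1+1$ and $l_2 = t_2+1$, we have $l_1-1 = t_1$ and $l_2-1 = t_2$, so both equations of system \eqref{matarromera} collapse to the single Diophantine equation
\begin{equation*}
m-1 = k_1 t_1 + k_2 t_2.
\end{equation*}

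Next, I would invoke Theorem \ref{MainThmforseparatedLPA}: $L_K(E)$ fails to have IBN precisely when $\sum_{w\in E^0}w = u+v$ belongs to the $\mathbb{Q}$-span of the CK2 relations, which, after the monoid translation carried out in Section \ref{Eristoff}, amounts to the existence of $k_1,k_2\in\Z$ and $m>1$ satisfying the displayed equation. Since $t_1,t_2\geq 1$, one has $\Z t_1 + \Z t_2 = \Z\,\gcd(t_1,t_2)$, and the smallest strictly positive integer of this form is $\gcd(t_1,t_2)$. Thus the smallest $m>1$ for which a solution exists is $m = 1 + \gcd(t_1,t_2)$, and this value is genuinely greater than $1$ because $\gcd(t_1,t_2)\geq 1$.

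Finally, I would conclude by combining the above with the general observation (made earlier in the preliminaries) that a Leavitt path algebra without IBN necessarily has type of the form $(1,m)$ where $m$ is the minimum natural number satisfying $[1]=m[1]$ in $\mathcal{V}(L_K(E))$. Hence $L_K(E)$ is of type $(1,1+\gcd(t_1,t_2))$, as claimed.

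There is essentially no serious obstacle here: the work has already been done in Section \ref{Eristoff}, and the lemma is just the explicit recording of the Case 3a (i) subcase. The only point requiring mild care is the remark that $\gcd(t_1,t_2)\geq 1$ guarantees $m>1$, so that the solution really does exhibit failure of IBN rather than the trivial equality $[1]=[1]$.
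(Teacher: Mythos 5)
Your proposal is correct and follows exactly the paper's own argument for Case 3a (i) of Subsection 2.2: the two equations of system \eqref{matarromera} coincide when $l_i-1=t_i$, reducing to $m-1=k_1t_1+k_2t_2$, and Bézout gives the minimal value $m-1=\gcd(t_1,t_2)$, hence type $(1,1+\gcd(t_1,t_2))$. No gaps; this matches the paper's proof essentially verbatim.
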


\noindent
{\bf Case 3a (ii).}  If $t_2\ne l_2-1$ we
have $k_2=0$ and the minimum solution for $m$ is $m=1+t_1=l_1$, which gives  a Leavitt path algebra not having IBN and of type $(1,l_1)$.
\medskip

This case is summarized below.

\begin{lemma} \label{albarino} Let $K$ be a field and $E$ be a graph of the form
 $$ \xymatrix{
{\bullet}^u  \ar@(ul,dl)_{(t_1+1)} \ar@/^-.5pc/[r]_{(t_1)}  & {\bullet}^v
\ar@(ur,dr)^{(l_2)}    \ar@/_.5pc/[l]_{(t_2)}}
$$ 
where $(l_2,t_2)\ne (0,0)$,  $l_2 -t_2 \neq 1$ and $t_1 \geq 1$. 
Then $L_K(E)$ does not have IBN and has type $(1,1+t_1)$.  %if and only if $(l_2,t_2) \ne (0,0)$ and .
\end{lemma}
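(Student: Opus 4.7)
The plan is to apply Theorem \ref{MainThmforseparatedLPA} (or equivalently, the monoid presentation already derived in Section \ref{Eristoff}) to the specific shape of this graph and to extract the minimum $m > 1$ for which $[1] = m[1]$ holds in $M_E$. Concretely, under the hypothesis $l_1 = t_1 + 1$, the defining equations \eqref{matarromera} that must be solved for some $k_1, k_2 \in \Z$ and $m > 1$ are
\begin{equation*}
\begin{cases}
m-1 = k_1 t_1 + k_2 t_2,\\
m-1 = k_1 t_1 + k_2(l_2-1),
\end{cases}
\end{equation*}
after substituting $l_1 - 1 = t_1$ in the first equation.

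First, I would subtract the two equations to obtain $k_2\bigl(t_2 - (l_2 - 1)\bigr) = 0$. The hypothesis $l_2 - t_2 \neq 1$ is equivalent to $t_2 \neq l_2 - 1$, and hence forces $k_2 = 0$. With $k_2 = 0$, both equations reduce to the single relation $m - 1 = k_1 t_1$.

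Next I would exhibit a solution with $m > 1$, thereby showing that $L_K(E)$ does \emph{not} have IBN: taking $k_1 = 1$ (which is allowed since $t_1 \geq 1$) yields $m = 1 + t_1 > 1$, so by Theorem \ref{MainThmforseparatedLPA} the IBN property fails. To obtain the exact type, I would argue for minimality: any valid $m$ must satisfy $m - 1 = k_1 t_1$ with $m - 1 \geq 1$ and $t_1 \geq 1$, which forces $k_1 \geq 1$, and hence $m \geq 1 + t_1$. Combined with the existence of $m = 1 + t_1$ as a solution, this gives type $(1, 1 + t_1)$.

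The proof is essentially a book-keeping exercise once the linear system \eqref{matarromera} is set up, so no real obstacle is expected; the only point that merits attention is translating the hypothesis $l_2 - t_2 \neq 1$ into the algebraic condition $t_2 - (l_2 - 1) \neq 0$ that kills $k_2$, and checking that the remaining hypothesis $(l_2, t_2) \neq (0,0)$ is consistent with (but not needed for) the minimality argument. In fact the entire statement is just the summary of Case 3a (ii) in the preceding discussion, so the proof amounts to recording that analysis.
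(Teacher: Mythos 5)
Your proposal is correct and follows exactly the paper's own argument (Case 3a (ii) of Subsection 2.2): substitute $l_1-1=t_1$ into the system \eqref{matarromera}, subtract the equations to force $k_2=0$ via $t_2\neq l_2-1$, and read off the minimal solution $m=1+t_1$ of $m-1=k_1t_1$. The only difference is that you spell out the minimality step ($k_1\geq 1$ hence $m\geq 1+t_1$) which the paper leaves implicit.
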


\noindent
{\bf Case 3b.} We analyze now the case $l_i-1\ne t_i$ for any $i$.

In what follows we will recall how to solve the following system of equations on $\Z$. Let $a, b, a', b', c, c' \in \Z$ be such that at least one of the following elements: $a, a', b, b'$ is non zero, and consider:
\begin{equation}\label{faustino}
\begin{aligned}
\begin{cases}
c&=\  a k_1 + b k_2\\
c'&= \ a' k_1 + b' k_2.
\end{cases}
\end{aligned}
\end{equation}

Without loss in generality we may assume $a$ or $a'$ is different from zero. 
%Otherwise, since $b$ or $b'$ has to be nonzero then, by swapping $b$ with $a$ and $b'$ with $a'$ we reduce to the case $a$ or $a'$ different from zero.

Since $a$ or $a'$ is nonzero, we may define $d:= \gcd(a, a')$, which is nonzero. We know that there exist $s, t\in \Z$ such that $d=as +a't$.

Now \eqref{faustino} can be rewritten as the matrix equation

\begin{equation}\label{santimia}
\begin{pmatrix}
a & b \\
a' & b'
\end{pmatrix}
\begin{pmatrix}
k_1 \\ k_2
\end{pmatrix} = \begin{pmatrix}
c \\ c'
\end{pmatrix}.
\end{equation}

A simple computation shows that the matrix 
$A=\begin{pmatrix}
s & t \\
-{\frac{a'}{d}} & {\frac{a}{d}}
\end{pmatrix}$  
is invertible with inverse 
$\begin{pmatrix}
\frac{a}{d} & -t \\
\frac{a'}{d} & s
\end{pmatrix}$. Multiplying (\ref{santimia}) by $A$ on the left hand side we get 

\begin{equation}\label{otro}
\begin{pmatrix}
d & \ast \\
0 & {\frac{\Delta}{d}}
\end{pmatrix}
\begin{pmatrix}
k_1\\ k_2
\end{pmatrix} = A\begin{pmatrix} 
c \\ c'
\end{pmatrix} = 
\begin{pmatrix}
sc+tc' \\ 
{-\frac{a'c}{d}}+ {\frac{ac'}{d}}
\end{pmatrix},
\end{equation}
where $\Delta=ab'-a'b$.

Consequently
 ${\frac{\Delta}{d}} k_2 = {\frac{ac'-a'c}{d}}$, implying ${\Delta} k_2 = {ac'-a'c}$. By performing the following substitutions 
$a=l_1-1$, $a'=t_1$, $b=t_2$, $b'=l_2-1$, $c=c'=m-1>0$, the last equation becomes 

\begin{equation}\label{Donsimon}
\Delta k_2=(l_1-1-t_1)(m-1),
\end{equation}
\noindent where $\Delta=(l_1-1)(l_2-1)-t_1t_2$. By swapping the roles of the vertices and  following a similar argument, we get
\begin{equation}\label{Donsimondos}
\Delta k_1=(l_2-1-t_2)(m-1).
\end{equation}
\smallskip

Now we consider the  cases that follow, taking into account if $\Delta$ is zero or not.
\medskip

\noindent
{\bf Case 3b (i).}  If $\Delta=0$ there is no solution neither for \eqref{Donsimon} nor \eqref{Donsimondos}. Hence the Leavitt path algebra has IBN and it will be studied later.
\medskip

\noindent
{\bf Case 3b (ii).}   If $\Delta\ne 0$, using \eqref{Donsimon} and \eqref{Donsimondos} we get $$\frac{\Delta k_2}{l_1-1-t_1}=m-1=
\frac{\Delta k_1}{l_2-1-t_2}.$$
\noindent Consequently we get the equation $k_1 (l_1-1-t_1)=k_2(l_2-1-t_2)$, which has solutions. The minimum values for $k_1$ and $k_2$ are
$$\displaystyle\frac{l_2-1-t_2}{\gcd(l_1-1-t_1,l_2-1-t_2)} \text { and }  \displaystyle\frac{l_1-1-t_1}{\gcd(l_1-1-t_1,l_2-1-t_2)}, \text{ respectively. } $$ 
At this point, to make reference to the graph we are considering, we change the notation and take $\Delta_E=\Delta=(l_1-1)(l_2-1)-t_1t_2$ for simplicity,  
we have $$m=1+\frac{\vert\Delta_E\vert}{\gcd(l_1-1-t_1,l_2-1-t_2)}.$$
%If $\Delta\ne 0$, note that the solution for the general equation $a x=b y$ (with $a, b \in \Z\setminus \{0\}$) is 

%\begin{equation}\label{vegasicilia}
%\begin{aligned}
%\begin{cases}
%x&= \displaystyle{\frac{{\rm lcm}{(a, b)}\ h}{a}}\\
%& \\
%y&= \displaystyle{\frac{{\rm lcm}(a, b)\ h}{b}}
%\end{cases}
%\end{aligned}
%\end{equation}
%for some $h\in \Z$.

These computations are summarized in the result that follows.

\begin{lemma} \label{cava} Let $K$ be a field and $E$ be a graph of the form
 $$ \xymatrix{
{\bullet}^u  \ar@(ul,dl)_{(l_1)} \ar@/^-.5pc/[r]_{(t_1)}  & {\bullet}^v
\ar@(ur,dr)^{(l_2)}    \ar@/_.5pc/[l]_{(t_2)}}
$$ 
where $(l_i,t_i)\ne (0,0)$ and $l_i -t_i \neq 1$, for $i = 1,2$. 
Then $L_K(E)$ does not have IBN  if and only if 
$\Delta_E \ne 0$. Moreover,  $L_K(E)$ has type  $\Big(1,1+\displaystyle\frac{\vert\Delta_E\vert}{\gcd(l_1-1-t_1,l_2-1-t_2)}\Big)$. 
\end{lemma}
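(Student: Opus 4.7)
The plan is to apply Theorem~\ref{MainThmforseparatedLPA}, which reduces the IBN question to the solvability in $\Z$ of the linear system \eqref{matarromera} with $m>1$, and then to invoke the integer row reduction already carried out in Case 3b to pass to the equivalent pair of equations \eqref{Donsimon}--\eqref{Donsimondos}. The standing hypotheses $(l_i,t_i)\neq(0,0)$ and $l_i-t_i\neq 1$ place us genuinely in Case 3b and, crucially, guarantee that $l_i-1-t_i\neq 0$ for both $i=1,2$; this nonvanishing is what drives the dichotomy on $\Delta_E$.

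First I would dispose of the case $\Delta_E=0$. The left-hand sides of \eqref{Donsimon} and \eqref{Donsimondos} then vanish, while the factor $l_i-1-t_i$ on the right is nonzero for each $i$. These two equations therefore force $m-1=0$, so no $m>1$ can satisfy \eqref{sardonia} and $L_K(E)$ has the IBN property.

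For the case $\Delta_E\neq 0$, eliminating $m-1$ between \eqref{Donsimon} and \eqref{Donsimondos} yields the compatibility condition $k_1(l_1-1-t_1)=k_2(l_2-1-t_2)$, whose minimal nonzero integer solution is $k_1=(l_2-1-t_2)/g$ and $k_2=(l_1-1-t_1)/g$, where $g:=\gcd(l_1-1-t_1,l_2-1-t_2)$. Substituting these values back into \eqref{Donsimon} and taking absolute values gives the minimum positive value $m-1=|\Delta_E|/g$. The one technical point worth checking is that $g\mid \Delta_E$, so that $|\Delta_E|/g$ is indeed a natural number: writing $l_i-1=t_i+g\lambda_i$ and expanding, $\Delta_E=(t_1+g\lambda_1)(t_2+g\lambda_2)-t_1 t_2=g(t_1\lambda_2+t_2\lambda_1+g\lambda_1\lambda_2)$, which settles the divisibility. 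The main conceptual subtlety I expect to have to treat with care is verifying that $m=1+|\Delta_E|/g$ really is the \emph{minimum} $m>1$ satisfying \eqref{sardonia} and not merely a permissible value; this follows from the minimality of the pair $(k_1,k_2)$ chosen above, since any strictly smaller positive $m-1$ would, via \eqref{Donsimon}--\eqref{Donsimondos}, produce a strictly smaller positive integer pair solving the compatibility relation, a contradiction.
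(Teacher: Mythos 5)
Your proposal is correct and follows essentially the same route as the paper: it reduces to the system \eqref{matarromera} via Theorem~\ref{MainThmforseparatedLPA}, passes to \eqref{Donsimon}--\eqref{Donsimondos}, uses $l_i-1-t_i\neq 0$ to kill the case $\Delta_E=0$, and extracts the minimal $m$ from the compatibility relation $k_1(l_1-1-t_1)=k_2(l_2-1-t_2)$ exactly as in Case 3b(ii). Your explicit verification that $\gcd(l_1-1-t_1,l_2-1-t_2)$ divides $\Delta_E$ is a small welcome addition that the paper leaves implicit.
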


We collect all the information of Lemmas \ref{oporto}, \ref{ribeiro}, \ref{madeira}, \ref{albarino} and \ref{cava} in Figure \ref{tableone}. To simplify, we associate  the set $$S=\{(l_1,t_1),(l_2,t_2)\}$$
\noindent to any two-vertex Leavitt path algebra.  
All possible  Leavitt path algebras not having IBN are the ones whose associated sets $S$ are listed below:
%tabla
{\small \begin{figure}[H]
\begin{tabular}{|c|c|c|}
\hline
& $S$ & Type $(1,k)$\\
\hline
I &$\Big\{(0,0), (t_2+1,t_2)\ \vert\  t_2\ge 1\Big\}$ & $k=1+t_2$\\
II &$\Big\{(1,0), (t_2+1,t_2)\ \vert\ t_2\ge 1\Big\}$ & $k=1+t_2$\\
III &$\Big\{(t_1+1,t_1),(t_2+1,t_2)\ \vert\  t_1, t_2\ge 1\Big\}$ & $k=1+\gcd(t_1,t_2)$\\
IV &$\Big\{(t_1+1,t_1),(l_2,t_2)\ \vert\ (l_2, t_2)\neq (0,0), l_2-t_2\ne 1, t_1\ge 1\Big\}$ & $k=1+t_1$\\
V &$\Big\{(l_1,t_1),(l_2,t_2)\ \vert l_i-t_i\ne 1, \Delta_E\ne 0, \text{ for } i=1,2\Big\}$ & $k=1+\displaystyle\frac{\vert\Delta_E\vert}{\gcd(l_1-1-t_1,l_2-1-t_2)}$ \\
\hline
\end{tabular}
\medskip
\caption{Invariants (Part I) for two-vertex Leavitt path algebras not having IBN}\label{tableone}
\end{figure}}

%%%%%%
%%%%%%
%%%%%%
%
%SECTION
%
%%%%%%
%%%%%%
%%%%%%

\section{Computation of  $K_0(L_K(E))$}

In the previous section we have computed the type of the two-vertex Leavitt path algebras which do not have IBN. The type is not the only invariant that we must use in order to classify those algebras. This is why we compute here  $K_0(L_K(E))$ in the cases that appear in Figure \ref{tableone}. We will remark that the order of the order unit is related to the type.

 Recall that $N'_E:=A_E-1'$, where $1'$ denotes the matrix built from the identity matrix changing the columns corresponding to sinks by columns of zeros.
\medskip

\noindent
{\bf Case I.} We have $S= \Big\{(0,0), (t_2+1,t_2)\ \vert\  t_2\ge 1\Big\}$, whose associated graph $E$ is
$$ \xymatrix{
{\bullet}^u    & {\bullet}^v
\ar@(ur,dr)^{(t_2+1)}    \ar@/_.5pc/[l]_{(t_2)}}
$$
\medskip

Then $A_E = \begin{pmatrix} 0 & 0 \\ t_2 & t_2+1\end{pmatrix}$ and 
$N'_E=\begin{pmatrix}0 & 0\\ t_2  &t_2 \end{pmatrix}$.
By \cite{HH}, the Smith normal form of $N_E'$ is $ \begin{pmatrix}0 & 0\\ 0 &t_2 \end{pmatrix}$. This implies (as follows by \cite[Theorem 4.2]{AB}) that  $K_0(L_K(E))$ is isomorphic to $\Z \times \Z_{t_2}$.
%%%%%%%

\medskip

\noindent
{\bf Case II.} Now $S= \Big\{(1,0), (t_2+1,t_2)\ \vert\  t_2\ge 1\Big\}$ and its associated graph $E$ is
$$ \xymatrix{
{\bullet}^u  \ar@(ul,dl)   & {\bullet}^v
\ar@(ur,dr)^{(t_2+1)}    \ar@/_.5pc/[l]_{(t_2)}}$$
\medskip

We have $A_E = \begin{pmatrix}1 & 0\\ t_2 & t_2+1 \end{pmatrix}$ and $N'_E=\begin{pmatrix}0 & 0\\ t_2 & t_2
\end{pmatrix}$. Again, the Smith normal form of $N_E'$ is $ \begin{pmatrix}0 & 0\\ 0 &t_2 \end{pmatrix}$ and  $K_0(L_K(E)) $ is isomorphic to $\Z \times \Z_{t_2}$.
\medskip

%%%%%%
\noindent
{\bf Case III.} We have $S=\Big\{(t_1+1,t_1),(t_2+1,t_2)\ \vert\  t_1, t_2\ge 1\Big\}$, whose associated graph $E$ is
$$ \xymatrix{
{\bullet}^u  \ar@(ul,dl)_{(t_1+1)} \ar@/^-.5pc/[r]_{(t_1)}   & {\bullet}^v
\ar@(ur,dr)^{(t_2+1)}    \ar@/_.5pc/[l]_{(t_2)}}
$$
\medskip

Then, $A_E = \begin{pmatrix}t_1+1 & t_1\\ t_2 & t_2+1 \end{pmatrix}$ and $N'_E=\begin{pmatrix}t_1 & t_1\\ t_2 & t_2
\end{pmatrix}$. The Smith normal form of $N_E'$ is $\begin{pmatrix} d & 0\\ 0 & 0 \end{pmatrix}$, where $d= \gcd(t_1, t_2)$.  Therefore $K_0(L_K(E))$ is isomorphic to $\Z \times \Z_d$.
\medskip

%%%%%%

\noindent
{\bf Case IV.} Here $S=\Big\{(t_1+1,t_1),(l_2,t_2)\ \vert\ (l_2, t_2)\neq (0,0), l_2-t_2\ne 1, t_1\ge 1\Big\}$ and its associated graph $E$ is
$$ \xymatrix{
{\bullet}^u  \ar@(ul,dl)_{(t_1+1)} \ar@/^-.5pc/[r]_{(t_1)}   & {\bullet}^v
\ar@(ur,dr)^{(l_2)}    \ar@/_.5pc/[l]_{(t_2)}}
$$
\medskip

Then, $A_E = \begin{pmatrix}t_1+1 & t_1\\ t_2 & l_2 \end{pmatrix}$ and $N'_E=\begin{pmatrix}t_1 & t_1\\ t_2 & l_2-1
\end{pmatrix}$. In this case, the Smith normal form of $N_E'$ is $\begin{pmatrix} d & 0\\ 0 & \frac{\vert t_1(l_2-t_2-1)\vert}{d} \end{pmatrix}$, where $d= \gcd(t_1, t_2, l_2-1)$.  Therefore $K_0(L_K(E))$ is isomorphic to $\Z_d \times \Z_{\frac{\vert t_1(l_2-t_2-1)\vert}{d}}$.
\medskip

%%%%%%%
\noindent
{\bf Case V.}  For the final case, $S=\Big\{(l_1,t_1),(l_2,t_2)\ \vert\  \Delta_E\ne 0, l_i-t_i\ne 1, \text{ for } i=1,2\Big\}$ and the associated graph $E$ is as follows
$$ \xymatrix{
{\bullet}^u  \ar@(ul,dl)_{(l_1)} \ar@/^-.5pc/[r]_{(t_1)}   & {\bullet}^v
\ar@(ur,dr)^{(l_2)}    \ar@/_.5pc/[l]_{(t_2)}}
$$
\medskip

In this case, $A_E = \begin{pmatrix}l_1 & t_1\\ t_2 & l_2 \end{pmatrix}$ and $N'_E=\begin{pmatrix}l_1-1 & t_1\\ t_2 & l_2-1
\end{pmatrix}$. The Smith normal form of $N_E'$ is $\begin{pmatrix} d & 0\\ 0 & \frac{\vert \Delta_E\vert}{d} \end{pmatrix}$, where $d= \gcd(l_1-1, t_1, l_2-1, t_2)$.  Therefore $K_0(L_K(E))$ is isomorphic to $\Z_d \times \Z_{\frac{\vert\Delta_E \vert}{d}}$.
\medskip

We will write $d_E$ and $\Delta_E$ to refer to the greatest common divisor of the entries and to the determinant of $N'_E$, respectively. 

To end this section we remark that the order of $[1]_E$ in $K_0(L_K(E))$ is $n$, where $(1,1+n)$ is the type of $L_K(E)$. To prove this, take into account the monomorphism from the Grothendieck group of \ $\mathcal{U}(L_K(E))$ into $ K_0(L_K(E))$ given in \cite[Proposition 2.5]{KO}, together with the
fact that $L_K(E)^{n+1}\cong L_K(E)$ (as $L_K(E)$-modules). Observe that  in  Case IV, $t_1$ divides $\frac{\Delta_E}{d_E}$ and in Case V,  we have that
$\displaystyle \frac{\vert\Delta_E\vert}{\gcd(l_1-1-t_1,l_2-1-t_2)} \text{ divides } \frac{\vert \Delta_E\vert}{d_E}$, as expected from
\cite[Proposition 2.5]{KO}.

%%%%%

In Figure \ref{tabletwo}, that summarizes the computation of the $K_0$ groups, we note that $K_0(L_K(E))$ is of the form $\Z_{\frac{\vert\Delta_E\vert}{d_E}}\times\Z_ {d_E}$ in each of the cases. 

{\begin{figure}[H] 
\resizebox{17cm}{!}{
\begin{tabular}{|c|c|c|c|c|}
\hline
& $S$ & Type $(1,k)$ & $\Delta_E$&$K_0(L_K(E))$\\
\hline
 I &$\Big\{(0,0), (t_2+1,t_2)\ \vert\  t_2\ge 1\Big\}$ & $k=1+d_E$ & $0$ &$ \Z_{d_E}\times\Z$ \\
II & $\Big\{(1,0), (t_2+1,t_2)\ \vert\ t_2\ge 1\Big\}$ & $k=1+d_E$ &  $0$ & $ \Z_{d_E}\times\Z$ \\
III &$\Big\{(t_1+1,t_1),(t_2+1,t_2)\ \vert\  t_1, t_2\ge 1\Big\}$ & $k=1+d_E$ &$0$ & $ \Z_{d_E}\times\Z$\\
IV & $\Big\{(t_1+1,t_1),(l_2,t_2)\ \vert\ (l_2, t_2)\neq (0,0), l_2-t_2\ne 1, t_1\ge 1\Big\}$ & $k=1+t_1$ & $\ne 0$ &$\Z_{d_E} \times \Z_{\frac{\vert \Delta_E\vert}{d_E}}$ \\
 V & $\Big\{(l_1,t_1),(l_2,t_2)\ \vert\ l_i-t_i\ne 1, \Delta_E\ne 0, \text{ for } i=1,2\Big\}$ & $k=1+\displaystyle\frac{\vert\Delta_E\vert}{\gcd(l_1-1-t_1,l_2-1-t_2)}$ & $\ne 0$ & $\Z_{d_E} \times \Z_{\frac{\vert \Delta_E\vert}{d_E}}$ \\
\hline
\end{tabular}}
\medskip
\caption{Invariants (Part II) for two-vertex  Leavitt path algebras not having IBN}\label{tabletwo}
\end{figure}}

\begin{remark}\label{jumilla}
\rm Any  Leavitt path algebra associated to a graph in Cases I, II, III is not isomorphic to any Leavitt path algebra in Cases  IV or V. 
Indeed, notice that in Cases I, II and III in Figure \ref{tabletwo}, $\Delta_E=0$, hence the $K_0$ groups are isomorphic to $\Z_{d_E} \times \Z$. So, in these cases the $K_0$ group has a torsion-free part while in 
Cases IV and V the $K_0$ is a torsion group. 
% Thus, the Leavitt path algebras associated to graphs in cases I, II and III can not be isomorphic to those whose graphs are in the cases IV or V. 
\end{remark}

\section{Classification of Leavitt path algebras not having IBN}  
In this section we  study the isomorphisms between the algebras in the different cases in Figure \ref{tabletwo} following the decision tree.  
Recall that $P_l$, $P_c$ and $P_{ec}$ denote the set of all line points, vertices in cycles with no exits and vertices in extreme cycles of $E$, respectively.
We will compute the ideals generated by the above sets, namely, $I(P_c)$, $ I(P_l)=\soc(L_K(E))$ and $I(P_{ec})$. Clearly, the socle is  invariant under isomorphisms and the ideal $I(P_c)$ is proved to be also invariant under isomorphisms  in \cite{ABS}. 

We start  by proving that $I(P_{ec})$ remains invariant under ring isomorphisms when $E$ is a finite graph.

\begin{theorem}
\label{EnateMerlot-Merlot} Let $E$ be a finite graph. Then the ideal $I(P_{ec})$ is invariant under any ring isomorphism. 
\end{theorem}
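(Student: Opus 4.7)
My plan is to identify a ring-theoretic characterization of $I(P_{ec})$ that is manifestly preserved by ring isomorphisms, in the spirit of the authors' use of the analogous characterizations for $\soc(L_K(E))=I(P_l)$ and $I(P_c)$. The starting point is the structural fact, valid for a finite graph: $I(P_{ec})$ decomposes as a finite direct sum $\bigoplus_{[c]} I([c])$, indexed by equivalence classes of extreme cycles (two extreme cycles being equivalent when they lie in the same strongly connected component of $E$), and each summand $I([c])$ is Morita equivalent to a unital purely infinite simple Leavitt path algebra. This follows from the definition of an extreme cycle together with the standard ideal theory of Leavitt path algebras.

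The characterization I would propose is: $I(P_{ec})$ coincides with the sum of all two-sided ideals $J$ of $L_K(E)$ which, viewed as (non-unital) rings, are simultaneously simple and purely infinite. Since both simplicity and pure infiniteness are strictly ring-theoretic notions (independent of grading, field, or graph), any ring isomorphism $\varphi\colon L_K(E)\to L_K(F)$ sends this family bijectively onto the corresponding family in $L_K(F)$, and hence $\varphi(I(P_{ec}^E))=I(P_{ec}^F)$. The inclusion ``$\subseteq$'' is immediate: each $I([c])$ is Morita equivalent (via the full idempotent given by the sum of the vertices of $[c]$) to a unital purely infinite simple ring, and hence is itself simple and purely infinite as a ring. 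For ``$\supseteq$'', one needs that no simple purely infinite ideal of $L_K(E)$ can sit outside $I(P_{ec})$; here the socle $I(P_l)$ is a sum of matrix algebras over $K$ (harbouring no infinite idempotents), and $I(P_c)$ is Morita equivalent to a sum of copies of $K[x,x^{-1}]$ (not even simple), so neither can contribute such an ideal.

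The main obstacle will be controlling the non-graded ideals of $L_K(E)$, which arise from cycles and could a priori furnish simple purely infinite ideals that are not of the form $I([c])$. The argument I would use is that every simple two-sided ideal $J$ of $L_K(E)$ must in fact be graded: its largest graded subideal $J_{gr}$ is itself a two-sided ideal of $L_K(E)$, hence of $J$, so simplicity forces $J_{gr}\in\{0,J\}$, and the option $J_{gr}=0$ is excluded by the standard fact that every nonzero ideal of a Leavitt path algebra contains a nonzero graded ideal. The classification of graded ideals by hereditary saturated subsets then reduces matters to picking out the minimal hereditary saturated sets $H$ for which $I(H)$ is simultaneously simple (forcing $H$ minimal) and purely infinite (excluding line-point classes and cycles without exits), which are precisely the saturated closures of the extreme cycle equivalence classes.
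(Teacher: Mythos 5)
Your strategy---characterizing $I(P_{ec})$ intrinsically as the sum of all two-sided ideals that are simple and purely infinite as rings, so that invariance under ring isomorphisms is automatic---is genuinely different from the paper's proof (which pushes $I(P^E_{ec})$ through the isomorphism, recognizes the image as a graded ideal $I(H)$ because it is generated by idempotents, and then excludes the possibility that $H$ meets line points or cycles without exits via a primitive-idempotent argument). However, as written your argument has a real gap: the ``standard fact that every nonzero ideal of a Leavitt path algebra contains a nonzero graded ideal'' is false. In the Leavitt path algebra of a single loop, namely $K[x,x^{-1}]$, the ideal generated by $1+x$ is nonzero and proper, while the only nonzero graded ideal is the whole ring; the same phenomenon occurs in any graph containing a cycle without exits. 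The correct general statement is that every nonzero ideal contains either a vertex or a nonzero polynomial in a cycle without exits, and only under Condition (L) does this force the presence of a vertex (equivalently, of a nonzero graded subideal). So your step ruling out $J_{gr}=0$ for a simple ideal $J$ does not go through as stated.

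The step is repairable precisely because you only need it for ideals $J$ that are simple \emph{and} purely infinite: if $J\cap E^0=\emptyset$, the structure theorem for ideals shows that $J$ is generated by polynomials in cycles without exits, hence $J\subseteq I(P_c)\cong\bigoplus_i M_{\Lambda_i}(K[x,x^{-1}])$; this ring has no nonzero ideal that is simple as a ring (a nonzero ideal has the form $M_{\Lambda_i}(I)$ for a nonzero ideal $I$ of $K[x,x^{-1}]$ and properly contains the nonzero ideal $M_{\Lambda_i}(I^2)$ whenever $I$ is proper, while the full summand is itself non-simple), and in any case it contains no infinite idempotents. With that substitution, your reduction to graded ideals, and thence to the hereditary saturated closures of the extreme-cycle equivalence classes, does go through, yielding a correct alternative to the paper's argument; what it buys is an explicit ring-theoretic description of $I(P_{ec})$ rather than just its invariance, at the cost of invoking more of the (non-graded) ideal structure theory than the paper needs.
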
 

\begin{proof} 
Assume that $E$ and $F$ are finite graphs and that $\varphi: L_K(E) \to L_K(F)$  is a ring isomorphism. 

Denote by $P_{ec}^E$ and  $P_{ec}^F$ the hereditary subsets of $E^0$ and $F^0$, respectively, consisting of vertices in extreme cycles in $E$ and $F$, respectively.

As any isomorphism sends idempotents to idempotents and by \cite[Corollary 2.9.11]{AAS}, $\varphi(I(P^E_{ec}))$ is a graded ideal (at a first glance it is a graded ring ideal but, taking into account \cite[Remark 1.2.11]{AAS} it is actually a graded algebra ideal). Hence, $\varphi(I(P^E_{ec}))= I(H)$ for some hereditary saturated set $H$ in $F$. Moreover, $H = F^0 \cap \varphi(I(P^E_{ec}))$ by \cite[Theorem 2.4.8]{AAS}.

Take $v \in H$. Since the graph is finite, $v$ has to connect to a line point, to a cycle without exits, or to an extreme cycle (by \cite[Theorem 2.9 (ii)]{CMMSS2} the ideal of $L_K(F)$ generated by the hereditary set $P^F_{lec}$ consisting of line points, vertices in cycles without exits and vertices in extreme cycles is dense and by \cite[Propostion 1.10]{CMMSS2} we have that $I(P^F_{lec})$ is dense if and only if every vertex of the graph connects to a vertex in $P^F_{lec}$). We are going to prove that the only option for $v$ is to connect to an extreme cycle. Assume that $v$ connects to a line point, say $w$, or to a cycle without exits, say $c$. Since $H$ is hereditary, $w\in H$ or $c^0\subseteq H$. This implies that $I(H)$ contains a primitive idempotent (see \cite[Proposition 5.3]{ABS}). Since primitive idempotents are preserved by isomorphisms, this means that $I(P^E_{ec})$ contains a primitive idempotent. But this is a contradiction because of \cite[Corollary 4.10]{ABS}, since $I(P^E_{ec})$ is purely infinite simple (by \cite[Proposition 2.6]{CMMSS2}). Applying the (CK2)  relation, $v$ is in the ideal  $I(P_{ec}^F)$; therefore, $\varphi(I(P^E_{ec})) \subseteq I(P_{ec}^F)$. Reasoning in the same way with $\varphi^{-1}$ we get $\varphi^{-1}(I(P^F_{ec})) \subseteq I(P^E_{ec})$, implying $\varphi(I(P^E_{ec})) = I(P^F_{ec})$.
\end{proof}

In the proceeding study we will follow the steps indicated in Figure \ref{WeAreHappy}. Note that the three sets $P_l$, $P_{c}$, $P_{ec}$ will play an important role in the classification of  two-vertex Leavitt path algebras not having IBN. We start by considering the different possibilities for the socle (non-zero or zero).

\subsection{$\soc(L_K(E))\ne 0$.}
\noindent

\medskip
From now on we will denote by $\mathcal C$ the class of Leavitt path algebras with nonzero socle, not having IBN which are associated to two-vertex graphs.

Summarizing the information contained in the one-sink case (Subsection \ref{OneSinkCase}) we get the lemma that follows, where $\rm{Type}(X)$ denotes the type of $X$ (in the sense of \cite[Definition 1.1.1]{AAS}).

\begin{lemma}
For every  $A\in \mathcal{C}$ the associated two-vertex graph, say $E_{l_2}$, is
$$\xymatrix{
{\bullet}^u   & {\bullet}^v
\ar@(ur,dr)^{(l_2)}    \ar@/_.5pc/[l]_{(l_2-1)}}
$$
with $l_2>1$. Then, $\soc(A)\cong M_{\infty}(K)$, $\bar{A}={A/\soc(A)}\cong L(1,l_2)$ and the type of $A$ is $(1,l_2)$. A complete system of invariants for $\mathcal C$ is the type. Also the quotient algebra is an invariant for $\mathcal C$. More pecisely, for two algebras $A$ and $B$ in $\mathcal C$ the following assertions are equivalent:
\begin{enumerate}[\rm (i)]
\item $A \cong B$.
\item $\bar A \cong \bar B$.
\item $\rm{Type}(A)=\rm{Type}(B)$.
\end{enumerate}
\end{lemma}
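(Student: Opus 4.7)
The plan is first to pin down the shape of the graph, then to compute $\soc(A)$, $\bar A$, and the type, and finally to deduce the three equivalences. The hypothesis $\soc(A)\ne 0$ forces the existence of a line point in $E$. In a finite two-vertex graph with no sink, both vertices emit edges, so any walk from a vertex $w$ continues indefinitely and must revisit some vertex; consequently $T(w)$ contains a cycle, ruling out line points entirely. So $\soc(A)\ne 0$ forces $E$ to have at least one sink. If both vertices were sinks, the graph would consist of two isolated points and $L_K(E)\cong K\times K$ would have IBN, contradicting $A\in\mathcal{C}$. Hence $E$ has exactly one sink, which places us in Case I of Figure~\ref{tableone}, and Lemma~\ref{oporto} then forces the graph to be $E_{l_2}$ with $l_2\ge 2$ and yields the type $(1,l_2)$.

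For the socle, $P_l=\{u\}$, so $\soc(A)=I(\{u\})$. Using the standard description of the ideal generated by a line point as a matrix algebra indexed by the paths ending at that point, $\soc(A)\cong M_{\Lambda}(K)$, where $\Lambda=\{\mu\in{\rm Path}(E_{l_2}):r(\mu)=u\}$. Because $v$ carries $l_2\ge 2$ loops and $l_2-1\ge 1$ edges to $u$, the set $\Lambda$ is countably infinite and $\soc(A)\cong M_\infty(K)$. The singleton $\{u\}$ is hereditary (as $u$ is a sink) and saturated (since $r(s^{-1}(v))\ni v\notin\{u\}$), so the quotient theorem for Leavitt path algebras gives $\bar A\cong L_K(E_{l_2}/\{u\})$. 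The quotient graph has a single vertex $v$ carrying $l_2$ loops, hence $\bar A\cong L(1,l_2)$.

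Both (i)$\Rightarrow$(ii) and (i)$\Rightarrow$(iii) are immediate since the socle (and hence the quotient $\bar A$) and the type are algebra invariants. The implication (iii)$\Rightarrow$(i) is the most direct: if $\rm{Type}(A)=\rm{Type}(B)=(1,l_2)$ with the same $l_2$, then by the first paragraph both algebras equal $L_K(E_{l_2})$ up to isomorphism, so $A\cong B$. Finally, (ii)$\Rightarrow$(iii) follows from $\bar A\cong L(1,l_2(A))$ together with the fact that $L(1,m)\cong L(1,n)$ forces $m=n$, for instance via $K_0(L(1,m))\cong\Z_{m-1}$, or as a consequence of Proposition~\ref{uzo} after stabilising with $M_\infty$.

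The main potential obstacle is the socle computation: one must track the index set $\Lambda$ carefully and confirm that the hypothesis $l_2\ge 2$ produces at least one edge from $v$ to $u$, so that $\Lambda$ is infinite and $\soc(A)$ is genuinely $M_\infty(K)$ rather than a finite matrix algebra; the remaining steps are tidy invocations of results already in place.
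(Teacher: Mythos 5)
Your proposal is correct and follows essentially the same route as the paper: reduce via $\soc(A)\ne 0$ to the one-sink case and Lemma \ref{oporto}, identify $\soc(A)\cong M_\infty(K)$ and $\bar A\cong L(1,l_2)$, and chase the equivalences through the type. The only cosmetic difference is that you distinguish $L(1,m)$ from $L(1,n)$ via $K_0\cong\Z_{m-1}$, where the paper simply reads off the type of $L(1,l_2)$; both are valid.
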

\begin{proof} Since $\soc(A)\ne 0$ there must be line-points in $E_{l_2}$, so this graph contains sinks. The unique graphs of this type which produce a Leavitt path algebra not having IBN are given in Lemma \ref{oporto}. Assume $A,B\in\mathcal{C}$. If $A\cong B$ then $\bar A\cong \bar B$ since every isomorphism preserves the socle.
Now, if $\bar A\cong \bar B$, $A=L_K(E_{l_2})$ and $B=L_K(E_{m_2})$, then we have $\bar A\cong L(1,l_2)$ and
$\bar B=L(1,m_2)$, hence $(1,l_2)=\hbox{Type}(L(1,l_2))=\hbox{Type}(L(1,m_2))=(1,m_2)$ giving $l_2=m_2$ hence the underlying graphs are the same and so
$A =B$. Finally, if $\text{Type}(A)=\text{Type}(B)$, then, by Lemma \ref{oporto},  $l_2=m_2$ so that $A \cong B$.
\end{proof}
 
 \subsection{$\soc(A)=0$.}
 \noindent
 
 \medskip
We  focus our attention on algebras $A=L_K(E)$ with $\vert E^0\vert=2$ and $\soc(A)=0$. We also rule out the purely infinite simple case because for this class a  system of invariants is well known: the Franks triple $(K_0,[1]_E,\vert N'_E\vert)$ (see \cite[Corollary 2.7]{Flow}).

\subsubsection{Case 1. Decomposable algebras}
\medskip
Denote by $\mathcal D$ the class of decomposable Leavitt path algebras  with zero socle, not having IBN and such that their associated graphs have two vertices.

\begin{lemma}\label{V(a)}
For every $A\in\mathcal{D}$ the associated two-vertex graph, say  $E_{l_1,l_2}$, is of the form
$$ \xymatrix{
{\bullet}^u\ar@(ul,dl)_{(l_1)}    & {\bullet}^v\ar@(ur,dr)^{(l_2)}     }
$$
\smallskip

\noindent with $l_1,l_2>1$. Given  two graphs $E_{l_1,l_2}$ and $E_{n_1,n_2}$,
we have $L_K(E_{l_1,l_2})\cong L_K(E_{n_1,n_2})$ if and only if $(l_1,l_2)=(n_1,n_2)$ or $(n_2,n_1)$. 
Thus, a complete system of invariants for the algebras $A$ in $\mathcal D$ is the pair $(l_1,l_2)$, where $l_1,l_2$ are the types of the unique two graded ideals of $A$, considered as algebras.
\end{lemma}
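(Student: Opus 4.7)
The plan is to proceed in two stages. First I would pin down exactly which two-vertex graphs produce algebras in $\mathcal{D}$; then I would classify such algebras up to isomorphism using a central-idempotent argument.

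For the first stage I would sift through the five cases of Figure \ref{tableone}. Case I contains a sink, which is a line point, so $\soc(A) \neq 0$; this case is excluded. In Cases II, III and IV at least one of $t_1, t_2$ is positive, so the graph has an edge joining $u$ and $v$ and is connected. One then argues that $L_K(E)$ is indecomposable: any decomposition as a direct product of two nontrivial algebras would, via the lattice of graded ideals, correspond to a partition of $E^0$ into two disjoint hereditary saturated subsets with no edges between them, which is ruled out by the presence of such an edge. The only remaining possibility is Case V with $t_1 = t_2 = 0$; the residual constraints $\Delta_E \ne 0$ and $l_i - t_i \ne 1$ collapse to $l_1, l_2 \geq 2$, producing exactly the graphs $E_{l_1, l_2}$ in the statement.

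Once the graph is identified, $L_K(E_{l_1, l_2}) \cong L(1, l_1) \times L(1, l_2)$ and the ``if'' direction is immediate by swapping factors. For ``only if'' I would use that $L(1, n)$ with $n > 1$ is simple with center $K$, so $Z(L_K(E_{l_1, l_2})) \cong K \times K$; any algebra isomorphism $L_K(E_{l_1, l_2}) \to L_K(E_{n_1, n_2})$ restricts to an isomorphism of centers, hence permutes the two primitive central idempotents, and induces isomorphisms $L(1, l_i) \cong L(1, n_{\sigma(i)})$ between the corresponding corners for some permutation $\sigma$ of $\{1, 2\}$. Since $\rm{Type}(L(1, m)) = (1, m)$ is an isomorphism invariant (equivalently $K_0(L(1, m)) \cong \Z_{m-1}$), this forces $l_i = n_{\sigma(i)}$ and so $\{l_1, l_2\} = \{n_1, n_2\}$. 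For the final assertion about invariants, the two summands $L(1, l_i)$ are precisely the ideals $I(\{u\})$ and $I(\{v\})$ generated by the two (singleton, hereditary saturated) vertices in the disconnected graph, each having type $(1, l_i)$ as an algebra.

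The main obstacle I anticipate is justifying indecomposability of $L_K(E)$ in Cases II--IV. Case II is the most delicate, since its cycle without exits gives rise to a genuine proper nonzero graded ideal $I(P_c) = I(\{u\})$; one must explain why this ideal is not a direct summand, which amounts to observing that it is a non-unital matrix algebra of infinite rank over $K[x, x^{-1}]$. A uniform way to dispatch all three cases is to combine the bijection between graded ideals of a row-finite Leavitt path algebra and hereditary saturated subsets of $E^0$ with the observation that any central idempotent can be taken homogeneous of degree $0$, so that algebraic decomposability coincides with graded decomposability and hence with disconnection of the underlying graph.
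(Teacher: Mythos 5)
Your argument is correct and ends up very close to the paper's own proof; the differences are organizational rather than substantive. For identifying the graphs, the paper runs the logic in the opposite (and cleaner) order: it starts from decomposability and invokes \cite[Theorem 6.5]{CMMS} to produce two nonempty hereditary saturated subsets covering $E^0$, which for two vertices forces $H_1=\{u\}$, $H_2=\{v\}$ and hence a disconnected two-rose graph, and only then imposes the non-IBN condition to get $l_1,l_2>1$; this avoids your case-by-case indecomposability checks for Cases II--IV. Note that your ``uniform way to dispatch all three cases'' --- that central idempotents can be taken homogeneous of degree $0$, so that decomposability coincides with disconnection of the graph --- is exactly the nontrivial content of \cite[Theorem 6.5]{CMMS} and should be cited rather than asserted; it is not an elementary fact about $\Z$-graded rings, and proving it is the point of that reference. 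For the isomorphism classification, your route via primitive central idempotents (using simplicity of $L(1,n)$ and $Z(L(1,n))\cong K$) is a legitimate and slightly more self-contained alternative to the paper's, which instead observes that graded ideals are precisely the ideals generated by idempotents \cite[Proof of Theorem 5.3]{AMP}, hence are preserved by any isomorphism, and that the unique proper nonzero graded ideals of $A$ are the two summands $L(1,l_i)$; both arguments then finish identically by invoking the type (equivalently, $K_0$) of $L(1,m)$.
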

\begin{proof}
Take $A \in \mathcal D$ and let $E$ be the two-vertex graph associated to $A$. By \cite[Theorem 6.5]{CMMS} there  must be hereditary saturated  nonempty subsets $H_1, H_2$ whose union is $E^0=\{u, v\}$. Then, necessarily $H_1=\{u\}$ and $H_2=\{v\}$, so there is no edge connecting $u$ to $v$ and vice versa. Thus, $E$ consists of $l_1$ loops  based at $u$ and $l_2$ loops  based at $v$. 
Taking into account Case 3.b in Subsection \ref{Eristoff}, the necessary and sufficient conditions for $A$ not to have IBN are $l_1,l_2>1$. Suppose $A=L_K(E_{l_1,l_2})\cong L(1,l_1)\oplus L(1,l_2)$ and $B=L_K(E_{n_1,n_2})\cong L(1,n_1)\oplus L(1,n_2)$ are in $\mathcal D$. The unique proper non-zero graded ideals in $A$ are isomorphic to $L(1,l_1)$ and $L(1,l_2)$, while  for $B$ they are isomorphic to $L(1,n_1)$ and $L(1,n_2)$. By \cite[Proof of Theorem 5.3]{AMP} an ideal is graded if and only if it is generated by idempotents, therefore graded ideals are preserved by isomorphism. Thus, if $A$ and $B$ are isomorphic, then $(l_1,l_2)=(n_1,n_2)$ or $(n_2,n_1)$. 
\end{proof}

\subsubsection{Case 2. Indecomposable algebras.}

 Let $A=L_K(E)$ be an indecomposable Leavitt path algebra, where $E$ is a two-vertex graph.
  Then $A$ can be either purely infinite simple or not.
\smallskip

{\it Case 2.1. \underline{Purely infinite simple algebras}}.
For these algebras we can use \cite[Corollary 2.7]{Flow}
which provides an invariant, the Franks triple, for determining the isomorphism classes.

 \begin{remark}\label{cacique}
 \rm
Assume  $E$ is a finite graph. Then $A:=L_K(E)$ is purely infinite simple if and only if $A$ is simple and $\soc(A)=0$
(see \cite[Corolario 3.4.3]{Maria}). 
 \end{remark}
 
  By  \cite[Lemma 2.7]{CMMSS2} the sum of the ideals $I(P_l)$, $I(P_c)$ and $I(P_{ec})$ is direct. In fact, since $E$ is finite, then this sum is a dense ideal in $A$ by 
 \cite[Theorem 2.9]{CMMSS2}.  Since $A$ is simple  then $A$ must coincide with $I(P_l)$, with $I(P_c)$ or with
 $I(P_{ec})$. 
 
Using Remark \ref{cacique} we get that $I(P_l)=0$; also $I(P_c)=0$ because the algebra is purely infinite simple; therefore, necessarily $A=I(P_{ec})$.
 
 \smallskip
 
{\it Case 2.2. \underline{Non-purely infinite simple algebras.}}

By Remark \ref{cacique},  the non-purely infinite simple Leavitt path algebras in this case  are non-simple. 
 \smallskip

{\it Case 2.2.1. $I(P_c)\ne 0$.}  Notice that there are no sinks and there is a cycle with no exits. The possibilities are:
$$\xymatrix{
{\bullet}^u\ar@(ul,dl)_{(1)}   & {\bullet}^v
\ar@(ur,dr)^{(l_2)}    \ar@/_.5pc/[l]_{(t_2)}}
\qquad 
\xymatrix{
{\bullet}^u\ar@/_.5pc/[r]_{(1)}   & {\bullet}^v
    \ar@/_.5pc/[l]_{(1)}}
$$
The Leavitt path algebra associated to the first graph does not have IBN only if $t_2=l_2-1$ (see Lemma \ref{ribeiro}), the type of the corresponding  Leavitt path algebra is $(1,l_2)$. The Leavitt path algebra associated to the second graph has IBN,
concretely it is isomorphic to $M_2(K[x, x^{-1}])$ by \cite[Theorem 3.3]{AAS2}.
So the type is again a sufficient invariant
to determine the isomorphism classes in this case. 
\smallskip

{\it Case 2.2.2. $I(P_c)=0$.}
By \cite[Proposition 1.10 and Theorem 2.9 (ii)]{CMMSS2} we have that in a finite graph any vertex connects either to a sink or to a cycle without exits or to an extreme cycle. Since there are no sinks and no cycles without exits, any vertex connects to an extreme cycle. Hence $I(P_{ec})\ne 0$. Moreover, as $I(P_{ec})$ is purely infinite simple (see  \cite[Proposition 2.6]{CMMSS2}), it has to be a  proper ideal of $L_K(E)$. We see that the only possible graph
is of the form:

$$\xymatrix{
{\bullet}^u\ar@(ul,dl)_{(l_1)}  \ar@/_.5pc/[r]_{(t_1)} & {\bullet}^v
\ar@(ur,dr)^{(l_2)}    }
$$
with $l_2>1$, as any cycle should have an exit. Also
 $t_1\ge 1$,  otherwise the algebra would be decomposable. Moreover, $l_1\ge 1$ because if $l_1=0$ then the Leavitt path algebra would be simple (by \cite[3.11 Theorem]{AA1}), a contradiction as we are assuming that
 the algebra is non-simple.
 Furthermore, if $l_1=1$ the algebra would have IBN by Lemma \ref{cava} because $\Delta_E=0$ in this case, so we have to assume $l_1\ge 2$.
 \smallskip
 
 To  convince the reader that we have completed the decision tree we just point out that $P_{ec}\neq \emptyset$ because every vertex connects to one vertex in $P_l\sqcup P_c\sqcup P_{ec}$ and, in our case, only $P_{ec}$ survives.
 
 \smallskip

We summarize all the data of this section in  Figure \ref{tablethree}. The order in which the graphs appear in the table below corresponds to the order in which the cases have been studied in the decision tree.
\begin{figure}[H]
\resizebox{17cm}{!}{
\begin{tabular}{|c|c|c|cc|c|c|c|c|}
\hline
& Decomposable & PIS & Graph  & & $I(P_l)$ &  $I(P_c)$ & $I(P_{ec})$ & $L_K(E)/I(P_{lec})$ \\
\hline
I &  NO  & NO & \hskip 1.2cm $$\xymatrix{
{\bullet}^u   & {\bullet}^v
\ar@(ur,dr)^{(t_2+1)}    \ar@/_.5pc/[l]_{(t_2)}}
$$ & with  $t_2\ge 1$ & $ \M_\infty(K)$ & $0$ & $0$ &$L(1,t_2+1)$  \\
& & & & & & & & \\
V(a)&  YES & NO &  $$ \xymatrix{
{\bullet}^u\ar@(ul,dl)_{(l_1)}    & {\bullet}^v\ar@(ur,dr)^{(l_2)}     }
$$  & $ l_1, l_2\ge 2$ & 0 & 0 & $  \bigoplus\limits_{i=1}^2 L(1,l_i) $ &$0$\\
& & & & & & & & \\
III &  NO & YES  & $$ \xymatrix{
{\bullet}^u  \ar@(ul,dl)_{(t_1+1)} \ar@/^-.5pc/[r]_{(t_1)}   & {\bullet}^v
\ar@(ur,dr)^{(t_2+1)}    \ar@/_.5pc/[l]_{(t_2)}}
$$ 
& $t_1, t_2\ge 1$ & $0$ &$0$ & $L_K(E)$ & $0$\\
& & & & & & & & \\
IV(a) &  NO & YES  & $$ \hskip -.3cm\xymatrix{
{\bullet}^u  \ar@(ul,dl)_{(t_1+1)} \ar@/^-.5pc/[r]_{(t_1)}   & {\bullet}^v
\ar@(ur,dr)^{(l_2)}    \ar@/_.5pc/[l]_{(t_2)}}
$$ & $l_2-t_2\ne 1; t_1,t_2\ge 1$ & $0$ & $0$ & $L_K(E)$ & $0$\\
& & & & & & & & \\
V(c) &  NO & YES  & $$ \xymatrix{
{\bullet}^u  \ar@/^-.5pc/[r]_{(t_1)}   & {\bullet}^v
  \ar@/_.5pc/[l]_{(t_2)}}
$$ & $t_1,t_2\ne 0$; $t_1\ge 2 \text{ or } t_2\ge 2$
 & $0$ & $ 0$ & $L_K(E)$ &$0$\\
V(d) &  NO & YES   & $$ \hskip -1cm \xymatrix{
{\bullet}^u  \ar@(ul,dl)_{(l_1)} \ar@/^-.5pc/[r]_{(t_1)}   & {\bullet}^v
 \ar@/_.5pc/[l]_{(t_2)}}
$$ & $l_1,t_1,t_2\ge 1; l_1-t_1\ne 1$ & $0$ & $ 0$ & $L_K(E)$ &$0$\\
V(e)  &  NO & YES  & $$ \hskip 1cm\xymatrix{
{\bullet}^u   \ar@/^-.5pc/[r]_{(t_1)}   & {\bullet}^v
\ar@(ur,dr)^{(l_2)}   }
$$ & $t_1\ge 1; l_2\ge 2$ & $0$ & $ 0$ & $ \M_{t_1+1}(L_K(1,l_2))$ &$0$\\
V(f) &  NO & YES    & 
$$ \xymatrix{
{\bullet}^u  \ar@(ul,dl)_{(l_1)} \ar@/^-.5pc/[r]_{(t_1)}   & {\bullet}^v
\ar@(ur,dr)^{(l_2)}    \ar@/_.5pc/[l]_{(t_2)}}
$$ &
$\Delta_E\ne 0; l_i, t_i\ge 1; l_i-t_i\ne 1, \text{ for  } i=1,2$ & $0$ & $ 0$ & $L_K(E)$ &$0$\\
& & & & & & & & \\
II &  NO & NO    & \hskip 0.7cm $$\xymatrix{
{\bullet}^u\ar@(ul,dl)_{(1)}   & {\bullet}^v
\ar@(ur,dr)^{(t_2+1)}    \ar@/_.5pc/[l]_{(t_2)}}$$ & with $t_2\ge 1$ & $0$ &  $\M_\infty(K[x,x^{-1}])$ &   $0$& $L(1,t_2+1)$\\ 
& & & & & & & & \\
IV(b)-V(b) &  NO & NO  &  \hskip .2cm $$\xymatrix{
{\bullet}^u\ar@(ul,dl)_{(l_1)}  \ar@/_.5pc/[r]_{(t_1)} & {\bullet}^v
\ar@(ur,dr)^{(l_2)}    }
$$ &
with $l_1,l_2 \ge 2$; $t_1\ge 1$ & $0$ & $0$  & $\M_{\infty}(L_K(1,l_2))$ & $L(1, l_1)$\\ 
& & & & & & & & \\
\hline
\end{tabular}}
\medskip
\caption{Invariants (Part III) for two-vertex Leavitt path algebras not having IBN}\label{tablethree}
\end{figure}
%Cases
%I, II, III V(g), V(e) and IV(b)-V(b) 

The cases appearing in this table follow from the cases in Figure \ref{tabletwo}. 
We call Case IV (a) to Case IV in Figure  \ref{tabletwo} for $t_2\neq 0$, Case IV (b) is  Case IV in Figure  \ref{tabletwo} for $t_2= 0$, and Case V(b) is Case V  in Figure  \ref{tabletwo} for $t_2=0, l_1, l_2 \geq 2$.

%In this table we have separated the cases IV and V. 

%IV t_2 = 0 t-2 \neq 0
%IV a = t-2 \neq 0

%V b corresponds to t_2=0, l_1 \geq 2

We justify that Cases IV(b) and V(b) are isomorphic. Any graph $E$ in Case IV(b) is as follows:  
$$
\xymatrix{
{\bullet}^u  \ar@(ul,dl)_{(t_1+1)} \ar@/^-.5pc/[r]_{(t_1)}   & {\bullet}^v
\ar@(ur,dr)^{(l_2)}  } 
$$
where $l_2,t_1 \in \N$, $l_2 \ge 2$. 
Now, consider the graph $F$ 
$$
\xymatrix{
{\bullet}^u  \ar@(ul,dl)_{(t_1+1)} \ar@/^-.5pc/[r]_{(s)}   & {\bullet}^v
\ar@(ur,dr)^{(l_2)}  } 
$$
where $s = (l_2-1) + t_1$ and $\Delta_{F}= t_1 (l_2 -1) \neq 0$, which is  in Case V(b). Note that $E$ is produced by a shift move from $F$ and by Theorem \ref{LuisFelipe} the Leavitt path algebras $L_K(E)$ and $L_K(F)$ are isomorphic. Therefore, it is enough to find the isomorphism classes in Case V(b). 
\medskip

In what follows we are going to compare Cases V(c) and V(d) to V(e).  

Take any graph from V(c), $$ \xymatrix{ E : & &
{\bullet}^u   \ar@/^-.5pc/[r]_{(t_1)}   & {\bullet}^v
  \ar@/_.5pc/[l]_{(t_2)}}
$$
where $t_1 \geq 2$ or $t_2 \geq 2$ and without loss of generality 
$t_1 \geq t_2$.

Consider the  graph 
$$ \xymatrix{ F : & &
{\bullet}^u   \ar@/^-.5pc/[r]_{(t_1)}   & {\bullet}^v
\ar@(ur,dr)^{(t_1t_2)}   }
$$
which is in V(e). It can be transformed into the graph $E$ via consecutive 
$t_2$-many shift moves  of $s^{-1}(u) \to s^{-1}(v)$. 
{By Theorem \ref{LuisFelipe}, the Leavitt path algebras $L_K(E)$ and $L_K(F)$ are isomorphic.}
\bigskip

Take any graph from V(d), for example:
$$ \xymatrix{ E : & &
{\bullet}^u   \ar@/^-.5pc/[r]_{(t_1)}   & {\bullet}^v
\ar@(ur,dr)^{(l_2)}    \ar@/_.5pc/[l]_{(t_2)}}
$$ where $l_2 - t_2 \neq 1$.

The  graph 
$$ \xymatrix{ F: & &
{\bullet}^u   \ar@/^-.5pc/[r]_{(t_1)}   & {\bullet}^v
\ar@(ur,dr)^{(l_2+t_1t_2)}   }
$$
which is in V(e), can be transformed into the graph $E$ via consecutive $t_2$-many shift moves of 
$s^{-1}(u) \to s^{-1}(v)$. 
{Again, by Theorem \ref{LuisFelipe}, the Leavitt path algebras $L_K(E)$ and $L_K(F)$ are isomorphic.} Thus, in Figure \ref{tablethree} we may eliminate the rows corresponding to the cases V(c) and V(d).
\bigskip 

Any graph from V(e) produces a Leavitt path algebra isomorphic to $M_{t_1+1}(L(1,l_2))$. 
Take a graph $E$ in  Case V(e):  
$$ \xymatrix{
{\bullet}^u  \ar@/^-.5pc/[r]_{(t_1)}   & {\bullet}^v
\ar@(ur,dr)^{(l_2)}    }
$$
where $t_1 \geq 1, l_2 \geq 2$. 
Since $E$ is an $l_2$-rose comet, by \cite[Proposition 2.2.19 ]{AAS}
$L_K(E) \cong  M_{t_1+1}(L(1,l_2))$. 
\medskip

The previous reasoning, as well as the table that follows will allow to refine Figure \ref{tablethree}. 

Recall that the \emph{Betti number} of a finitely generated abelian group  $G$, denoted by ${\rm B}(G)$ is the dimension (as a $\Z$-module) of the free part of $G$.

In Figure \ref{abanicoMuge},  an entry 1 or  0 in the first and the second columns will mean that $P_l(E)$ and  $P_c(E)$ are non-empty or empty, respectively. In the third column an entry 1 will mean that the Leavitt path algebra is decomposable, while 0 will stand for the opposite.  An entry 1 in the PIS column stands for a Leavitt path algebra which is purely infinite simple. An entry 1 in the $B(K_0)$ column represents the Betti number of the $K_0$ of the corresponding Leavitt path algebra. 

\begin{figure}[H]
\begin{center}
\begin{tabular}{|c|c|c|c||c|c|}
\hline
$P_l$ & $P_c$ & Dec & PIS & $\hbox{B}(K_0)$ & \cr
\hline
 $1$  &  $0$ & $0$ & $0$  &  $1$ & I \cr 
  $0$  &  $1$ & $0$ & $0$  &  $1$ & II \cr 
  $0$  &  $0$ & $1$ & $0$  &  $0$ & V(a) \cr 
  $0$  &  $0$ & $0$ & $1$  &  $1$ & III\cr
 $0$  &  $0$ & $0$ & $1$  &  $0$ &  IV(a),\ V(e),\ V(f)\cr 
 $0$  &  $0$ & $0$ & $0$  &  $0$ & V(b)\cr 
 \hline
\end{tabular}
\end{center}
\medskip
\caption{ Non-IBN cases}\label{abanicoMuge}
\end{figure}

\begin{figure}[H]
\resizebox{17cm}{!}{
\begin{tabular}{|c|c|c|cc|c|c|c|c|}
\hline
& Decomposable & PIS & Graph  & & $I(P_l)$ &  $I(P_c)$ & $I(P_{ec})$ & $L_K(E)/I(P_{lec})$ \\
\hline
I &  NO  & NO & \hskip 1.2cm $$\xymatrix{
{\bullet}^u   & {\bullet}^v
\ar@(ur,dr)^{(t_2+1)}    \ar@/_.5pc/[l]_{(t_2)}}
$$ & with  $t_2\ge 1$ & $ \M_\infty(K)$ & $0$ & $0$ &$L(1,t_2+1)$  \\
& & & & & & & & \\
V(a)&  YES & NO &  $$ \xymatrix{
{\bullet}^u\ar@(ul,dl)_{(l_1)}    & {\bullet}^v\ar@(ur,dr)^{(l_2)}     }
$$  & $ l_1, l_2\ge 2$ & 0 & 0 & $  \oplus_{i=1}^2 L(1,l_i) $ &$0$\\
& & & & & & & & \\
III &  NO & YES  & $$ \xymatrix{
{\bullet}^u  \ar@(ul,dl)_{(t_1+1)} \ar@/^-.5pc/[r]_{(t_1)}   & {\bullet}^v
\ar@(ur,dr)^{(t_2+1)}    \ar@/_.5pc/[l]_{(t_2)}}
$$ 
& $t_1, t_2\ge 1$ & $0$ &$0$ & $L_K(E)$ & $0$\\
& & & & & & & & \\
IV(a) &  NO & YES  & $$ \xymatrix{
{\bullet}^u  \ar@(ul,dl)_{(t_1+1)} \ar@/^-.5pc/[r]_{(t_1)}   & {\bullet}^v
\ar@(ur,dr)^{(l_2)}    \ar@/_.5pc/[l]_{(t_2)}}
$$ & $l_2-t_2\ne 1; t_1,t_2\ge 1$ & $0$ & $0$ & $L_K(E)$ & $0$\\
& & & & & & & & \\

V(e)  &  NO & YES  & $$ \xymatrix{
{\bullet}^u   \ar@/^-.5pc/[r]_{(t_1)}   & {\bullet}^v
\ar@(ur,dr)^{(l_2)}   }
$$ & $t_1\ge 1; l_2\ge 2$ & $0$ & $ 0$ & $ \M_{t_1+1}(L_K(1,l_2))$ &$0$\\
V(f) &  NO & YES    & 
$$ \xymatrix{
{\bullet}^u  \ar@(ul,dl)_{(l_1)} \ar@/^-.5pc/[r]_{(t_1)}   & {\bullet}^v
\ar@(ur,dr)^{(l_2)}    \ar@/_.5pc/[l]_{(t_2)}}
$$ &
$\Delta_E\ne 0; l_i, t_i\ge 1; l_i-t_i\ne 1, \text{ for  } i=1,2$ & $0$ & $ 0$ & $L_K(E)$ &$0$\\
& & & & & & & & \\
II &  NO & NO    & $$\xymatrix{
{\bullet}^u\ar@(ul,dl)_{(1)}   & {\bullet}^v
\ar@(ur,dr)^{(t_2+1)}    \ar@/_.5pc/[l]_{(t_2)}}$$ & with $t_2\ge 1$ & $0$ &  $\M_\infty(K[x,x^{-1}])$ &   $0$& $L(1,t_2+1)$\\ 
& & & & & & & & \\
V(b) &  NO & NO  &  $$\xymatrix{
{\bullet}^u\ar@(ul,dl)_{(l_1)}  \ar@/_.5pc/[r]_{(t_1)} & {\bullet}^v
\ar@(ur,dr)^{(l_2)}    }
$$ &
with $l_1,l_2 \ge 2$; $t_1\ge 1$; $l_1-t_1 \neq 1$ & $0$ & $0$  & $\M_{\infty}(L_K(1,l_2))$ & $L(1, l_1)$\\ 
& & & & & & & & \\
\hline
\end{tabular}}
\medskip
\caption{Invariants (Part III bis) for two-vertex Leavitt path algebras not having IBN}\label{tablethreebis}
\end{figure}

\begin{remark}\label{NonIso}
\rm
There is an overlap in Cases IV(a), V(e) and V(f). Consider, for example, the graphs

$$ 
E: \xymatrix{
{\bullet}^u  \ar@(ul,dl)_{(2)} \ar@/^-.5pc/[r]_{(2)}   & {\bullet}^v
\ar@(ur,dr)^{(1)}    \ar@/_.5pc/[l]_{(1)}}
\quad
F:  \xymatrix{
{\bullet}^u   \ar@/^-.5pc/[r]_{(2)}   & {\bullet}^v
\ar@(ur,dr)^{(3)}}
\quad
G: \xymatrix{
{\bullet}^u  \ar@(ul,dl)_{(3)} \ar@/^-.5pc/[r]_{(2)}   & {\bullet}^v
\ar@(ur,dr)^{(1)}    \ar@/_.5pc/[l]_{(1)}}
$$ 

We have that E, F and G are in cases IV(a), V(e) and V(f), respectively, and the corresponding Leavitt path algebras are isomorphic (via shift moves).
\end{remark}

We state the main result for Leavitt path algebras not having IBN under consideration. 

\begin{theorem}\label{lagardeisillaBis}
Let $E$ be a finite graph with two-vertices whose Leavitt path algebra $L_K(E)$ does not have IBN. Then, $L_K(E)$ is isomorphic to a Leavitt path algebra whose associated graph is
\begin{enumerate}[\rm (i)]
\item in Case I, if and only if $P_l(E)\neq \emptyset$ (which implies $P_c(E)=\emptyset$, $P_{ec}(E)=\emptyset$, $L_K(E)$  is neither decomposable nor purely infinite simple and $B(K_0(L_K(E)))= 1$). Any two Leavitt path algebras in this situation are isomorphic if and only if their types are the same.
\item  in Case II, if and only if $P_c(E)\neq \emptyset$ (which implies $P_l(E)=\emptyset$, $P_{ec}(E)=\emptyset$, $L_K(E)$  is neither decomposable nor purely infinite simple and $B(K_0(L_K(E)))= 1$). Any two Leavitt path algebras in this situation are isomorphic if and only if their types are the same.
\item in Case V(a) if and only if $L_K(E)$ is decomposable (which implies $P_l(E)=\emptyset$, $P_c(E)=\emptyset$, $P_{ec}(E)\neq\emptyset$, $L_K(E)$  is not purely infinite simple and $B(K_0(L_K(E)))= 0$). Any two Leavitt path algebras in this situation are isomorphic if and only if the sets of the types of the non-zero proper ideals coincide.
\item in Case III if and only if $L_K(E)$ is purely infinite simple and $B(K_0(L_K(E)))= 1$
 (which implies $P_l(E)=\emptyset$, $P_c(E)=\emptyset$, $P_{ec}(E)\neq\emptyset$). Any two Leavitt path algebras in this situation whose associated graphs have $S=\{(t_1+1, t_1), (t_2+1, t_2)\}$, $S'=\{(t'_1+1, t'_1), (t'_2+1, t'_2)\}$ are isomorphic if and only if   ${\rm gcd}(t_1, t_2) = {\rm gcd}(t'_1, t'_2) $.
 \item in Cases IV(a), V(e) or V(f) if and only if the Leavitt path algebra $L_K(E)$ is purely infinite simple and $B(K_0(L_K(E)))= 0$
 (which implies $P_l(E)=\emptyset$, $P_c(E)=\emptyset$, $P_{ec}(E)\neq\emptyset$). Any two Leavitt path algebras in this situation whose Franks triples coincide are isomorphic. On the other hand, if two Leavitt path algebras in these cases are isomorphic, then their Franks triples coincide up to the sign of the determinant.
 \item in Case V(b) if and only if $P_l(E)=\emptyset$, $P_c(E)= \emptyset$ (which implies $P_{ec}\neq \emptyset$),  $L_K(E)$   is neither decomposable nor purely infinite simple and $B(K_0(L_K(E)))=0$. Any two Leavitt path algebras in this situation whose associated graphs $E$ and $F$ are in Case V(b)  which are isomorphic must satisfy
$d_E = d_F$ and $gcd(l_1-1-t_1,l_2-1) = gcd(l_1-1-t_1',l_2-1)$.  
\end{enumerate}
\end{theorem}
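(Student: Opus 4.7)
The overall strategy is to show that the six cases form a partition of the class of two-vertex non-IBN Leavitt path algebras, where the partition is detected by invariants, and then within each case to determine when two algebras are isomorphic. First, I would verify that every ingredient of the partition is preserved under algebra isomorphism: the ideals $I(P_l)=\soc(L_K(E))$, $I(P_c)$ and $I(P_{ec})$ are invariant (the first by classical socle arguments, the second by \cite{ABS}, the third by Theorem \ref{EnateMerlot-Merlot}); being decomposable, being purely infinite simple and the Betti number of $K_0$ are also invariants. Reading off Figure \ref{abanicoMuge} one then sees that the six rows are distinguished by the six-tuple $(P_l\neq\emptyset,\,P_c\neq\emptyset,\,\mathrm{Dec},\,\mathrm{PIS},\,B(K_0))$, so these invariants alone produce the exclusive case distinction asserted in (i)--(vi).

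Next I would show the partition is exhaustive. By the analysis of Section~2 (Lemmas \ref{oporto}--\ref{cava} collected in Figure \ref{tabletwo}) every non-IBN two-vertex graph falls into one of Cases I--V. Then one invokes Theorem \ref{LuisFelipe}: the shift-move reductions worked out just before Figure \ref{abanicoMuge} collapse Case IV(b) into V(b), and Cases V(c) and V(d) into V(e), yielding exactly the eight rows of Figure \ref{tablethreebis}. Merging IV(a), V(e), V(f) (which have the same invariant profile) into a single class via Remark \ref{NonIso} gives the six cases of the statement, matching the rows of Figure \ref{abanicoMuge}.

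For the isomorphism classifications inside each case I would argue as follows. In Cases I and II the quotient $L_K(E)/\soc(L_K(E))$ (resp.\ $L_K(E)/I(P_c)$) is $L(1,t_2+1)$, so the type is a complete invariant, which is exactly what is claimed. In Case V(a) the algebra splits as $L(1,l_1)\oplus L(1,l_2)$ by Lemma \ref{V(a)}; graded ideals are preserved by isomorphism, and the types of the two summands are determined by Proposition \ref{uzo}, giving the multiset $\{l_1,l_2\}$ as complete invariant. In Case III the algebra is PIS with $\Delta_E=0$, $K_0\cong\Z\times\Z_{\gcd(t_1,t_2)}$, and one applies the Franks classification \cite[Corollary 2.7]{Flow}: the torsion summand is $\gcd(t_1,t_2)$, proving the stated equivalence. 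Cases IV(a), V(e), V(f) are PIS with $B(K_0)=0$, so they are classified by the Franks triple $(K_0,[1]_E,|N'_E|)$, again by \cite[Corollary 2.7]{Flow}, and the sign ambiguity of the determinant is the only flexibility. Finally, in Case V(b) one gets $d_E$ from the torsion invariant factors of $K_0\cong\Z_{d_E}\times\Z_{|\Delta_E|/d_E}$, and $\gcd(l_1-1-t_1,l_2-1)$ from the type, since by Lemma \ref{cava} (specialized at $t_2=0$) this type equals $\bigl(1,\,1+|\Delta_E|/\gcd(l_1-1-t_1,l_2-1)\bigr)$, and $|\Delta_E|$ itself is recovered from $K_0$. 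This produces the two necessary conditions of part (vi).

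The main obstacle, in my view, lies in Case V(b): it is neither PIS nor decomposable, and the standard Franks-triple machinery does not directly apply. One has to extract invariants by combining the quotient $L_K(E)/I(P_{ec})\cong L(1,l_1)$, the ideal $I(P_{ec})\cong M_\infty(L(1,l_2))$ (both of which recover $l_1$ and $l_2$ via Proposition \ref{uzo}), and the information coming from $K_0$ and the type. A secondary subtlety is the merging of Cases IV(a), V(e), V(f) into one isomorphism class: one must carefully check, using shift moves as in Remark \ref{NonIso}, that the three graph shapes cannot be separated by any invariant beyond the Franks triple, which is the reason (v) is stated with the ``up to sign of the determinant'' caveat.
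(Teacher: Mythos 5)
Your plan follows the paper's proof essentially step for step: separating the cases by the invariants collected in Figures \ref{abanicoMuge} and \ref{tablethreebis} (after the shift-move collapses of IV(b), V(c), V(d)), then classifying within each case via the quotient $L_K(E)/I(P_{lec})$, Lemma \ref{V(a)}, the Franks triple for the purely infinite simple cases, and the combination of $I(P_{ec})$, Proposition \ref{uzo}, $K_0$ and the type for Case V(b). The only details you gloss over that the paper spells out are, in Case III, the verification that the order units correspond under the $K_0$-isomorphism (both map to $(0,\bar 1)$ in $\Z\times\Z_n$) and the use of Proposition \ref{RingVersusAlgebra} to upgrade the ring isomorphism supplied by \cite[Corollary 2.7]{Flow} to an algebra isomorphism.
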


\begin{proof}
By looking at Figures \ref{abanicoMuge} and \ref{tablethreebis}  we can distinguish  the different cases that appear in the statement.
\medskip

Now we study isomorphisms within each case. Consider a graph $E$ in either   Case I or Case II. 
Since $L_K(E)/I(P_{lec})$ is determined by
$t_2$, which is the only variable of the graph, each graph in these cases  produces a non-isomorphic Leavitt path algebra. 
Similarly, any graph in Case V(a) produces a distinct isomorphism class by Lemma \ref{V(a)}.
Let us study the graphs in Case III.  Consider  $E$ to be the graph with $S_E=\{(t_1+1,t_1),(t_2+1,t_2)\}$, and $F$ to be the graph with $S_F=\{(n+1,n),(n+1,n)\}$, where $n:=\gcd(t_1,t_2)$.
Then $K_0(L_K(E))$ and $K_0(L_K(F))$ are both isomorphic to $\Z\times\Z_n$ and  there is an isomorphism from $K_0(L_K(E))$ to $K_0(L_K(F))$ sending  $[1]_E$ to $[1]_F$, which are
both mapped to $(0,\bar 1)$ in $\Z\times \Z_n$. Moreover, the
determinants  agree: $\Delta_E=\Delta_F=0$. So, by \cite[Corollary 2.7]{Flow}, $L_K(E)$ is ring isomorphic to
$L_K(F)$.  Since the center of $L_K(E)$ is isomorphic to $K$ because the Leavitt path algebra is unital and purely infinite simple (see, for example \cite[Theorem 3.7]{CMMSS2} and \cite[Theorem 4.2]{AC}), we may apply Proposition \ref{RingVersusAlgebra} to get that there is an algebra isomorphism from $L_K(E)$ to $L_K(F)$.
Therefore, for any positive integer $n$, the graph with $S=\{(n+1,n),(n+1,n)\}$ produces an algebra isomorphism class.

\medskip
Consider any two graphs $E$, $F$ in Case V(b) such that $L_K(E) \cong L_K(F)$, where
$$
\xymatrix{ E: & &
{\bullet}^u  \ar@(ul,dl)_{(l_1)} \ar@/^-.5pc/[r]_{(t_1)}   & {\bullet}^v
\ar@(ur,dr)^{(l_2)}  } 
 \quad 
\xymatrix{  F: & &
{\bullet}^u  \ar@(ul,dl)_{(l_1')} \ar@/^-.5pc/[r]_{(t_1')}   & {\bullet}^v
\ar@(ur,dr)^{(l_2')}  } 
$$
with $t_1,t_1' \geq 1; l_1,l_2,l_2', l_1' \geq 2$.  
\medskip

Recall that  $I(P^E_{ec})$ is isomorphic to $M_{\infty}(L_K(1,l_2))$
and $I(P^F_{ec})$  is isomorphic to $M_{\infty}(L_K(1,l_2'))$. 
The unique proper nonzero graded ideals in $L_K(E)$ and $L_K(F)$ are $I(P^E_{ec})$ and $I(P^F_{ec})$, respectively (because in both
graphs, the only proper nontrivial hereditary and saturated subset is $\{v\}$).
We know, by \cite[Proof of the Theorem 5.3]{AMP}, that an ideal in a Leavitt path algebra is graded if and only if it is generated by idempotents. Therefore, graded ideals are preserved by isomorphisms. Thus, if $L_K(E)$ and $L_K(F)$ are isomorphic, by Theorem \ref{EnateMerlot-Merlot} the isomorphism maps $I(P^E_{ec})$ to $I(P^F_{ec})$. 
By Proposition \ref{uzo} we get
$l_2=l_2'$.  Moreover $L(1,l_1)\cong L_K(E)/I(P^E_{ec})\cong L_K(F)/I(P^F_{ec})\cong L(1,l_1')$, which implies $l_1=l_1'$. 

\medskip

Note also that  $t_1 = (l_2-1)q +r$ for some $q \in \N$ and $0< r \le l_2-1$.  
Observe that by applying successively shift moves to $E$, we produce $G$, where
$$  \xymatrix{E: & &
{\bullet}^u\ar@(ul,dl)_{(l_1)}  \ar@/_.5pc/[r]_{(t_1)} & {\bullet}^v
\ar@(ur,dr)^{(l_2)}    } 
\xymatrix{& G: & &
{\bullet}^u  \ar@(ul,dl)_{(l_1)} \ar@/^-.5pc/[r]_{(r)}   & {\bullet}^v
\ar@(ur,dr)^{(l_2)}  } $$
By Theorem \ref{LuisFelipe}, $L_K(E)$ is isomorphic to $L_K(G)$.
Hence, to find the isomorphism classes in Case V(b), it is enough to consider the graphs:
$$
\xymatrix{ E: & &
{\bullet}^u  \ar@(ul,dl)_{(l_1)} \ar@/^-.5pc/[r]_{(t_1)}   & {\bullet}^v
\ar@(ur,dr)^{(l_2)}  } 
 \quad 
\xymatrix{  & F: & &
{\bullet}^u  \ar@(ul,dl)_{(l_1)} \ar@/^-.5pc/[r]_{(t_1')}   & {\bullet}^v
\ar@(ur,dr)^{(l_2)}  } 
$$
where $1\le t_1,t_1'\le l_2-1$. Now, $\Delta_E = |\Delta_E| = |\Delta_F| =\Delta_F$.

If $d_E \neq d_F$, then $K_0(L_K(E))$ is not isomorphic to $K_0(L_K(F))$, and 
$L_K(E)$ cannot be isomorphic to $L_K(F)$. 

If $d_E = d_F$ and $gcd(l_1-1-t_1,l_2-1) \neq gcd(l_1-1-t_1',l_2-1) $, then $L_K(E)$ cannot be isomorphic to $L_K(F)$ as they  have different  types.
\end{proof}

\begin{remark}
\rm
We do not know if the converse of (vi) in Theorem \ref{lagardeisillaBis} is true or not.
Take $E$ and $F$ as in Case V(b). 
If $d_E = d_F$ and $gcd(l_1-1-t_1,l_2-1) = gcd(l_1-1-t_1',l_2-1)$, then 
both the type and the $K_0$ groups are the same. Moreover, when the Leavitt path algebras have type $(1,n+1)$, the order unit  will be an element of order $n$. But we do not know if this implies that the Leavitt path algebras $L_K(E)$ and $L_K(F)$ are isomorphic or not.
 Note that the graphs of this form do not produce purely infinite simple Leavitt path 
 algebras, hence we cannot use the algebraic Kirchberg-Philips Theorems. 
 \end{remark}

We illustrate that there are graphs in Case V(b) such that some of them produce Leavitt path algebras which are not isomorphic while others produce Leavitt path algebras for which we cannot say whether they are isomorphic or not.

\begin{example}\label{givine}
\rm
Consider the three graphs that follow.
$$ \xymatrix{
{\bullet}^u  \ar@(ul,dl)_4   & {\bullet}^v
\ar@(ur,dr)^2    \ar@/_.5pc/[l]_{1}}
\quad
 \xymatrix{
{\bullet}^u  \ar@(ul,dl)_4   & {\bullet}^v
\ar@(ur,dr)^2    \ar@/_.5pc/[l]_{2}}
\quad
 \xymatrix{
{\bullet}^u  \ar@(ul,dl)_4   & {\bullet}^v
\ar@(ur,dr)^2    \ar@/_.5pc/[l]_{3}}
$$

The first one produces a Leavitt path algebra of type $(1,2)$ whereas both, the second and the third graphs, produce Leavitt path algebras of type $(1,4)$ such that their $K_0$ groups are $\Z_3$. 
$$ \xymatrix{
{\bullet}^u  \ar@(ul,dl)_4   & {\bullet}^v
\ar@(ur,dr)^2    \ar@/_.5pc/[l]_{2}}
\quad
 \xymatrix{
{\bullet}^u  \ar@(ul,dl)_4   & {\bullet}^v
\ar@(ur,dr)^2    \ar@/_.5pc/[l]_{3}}
$$
\end{example}
It is an open question (at least for the authors of this paper) if  the Leavitt path algebras associated to the graphs in Example \ref{givine} are isomorphic. However, we have studied if they are graded isomorphic, and the answer is no. 

\begin{example}
\rm
Consider the graphs that follow.
$$ \xymatrix{
{\bullet}^u  \ar@(ul,dl)_5   & {\bullet}^v
\ar@(ur,dr)^2    \ar@/_.5pc/[l]_{1}}
\quad
 \xymatrix{
{\bullet}^u  \ar@(ul,dl)_5   & {\bullet}^v
\ar@(ur,dr)^2    \ar@/_.5pc/[l]_{2}}
\quad
 \xymatrix{
{\bullet}^u  \ar@(ul,dl)_5   & {\bullet}^v
\ar@(ur,dr)^2    \ar@/_.5pc/[l]_{3}}
\quad 
\xymatrix{
{\bullet}^u  \ar@(ul,dl)_5   & {\bullet}^v
\ar@(ur,dr)^2    \ar@/_.5pc/[l]_{4}}
$$

The first one produces a Leavitt path algebra of type $(1, 2)$,  and the third graph produces a Leavitt path algebra of type $(1,3)$. Also, the second and the fourth graphs, both produce Leavitt path algebras of type $(1,5)$, and their $K_0$ groups are the same. It is an open question whether the following graphs give rise to isomorphic Leavitt path algebras:
$$ \xymatrix{
{\bullet}^u  \ar@(ul,dl)_5   & {\bullet}^v
\ar@(ur,dr)^2    \ar@/_.5pc/[l]_{2}}
\quad
 \xymatrix{
{\bullet}^u  \ar@(ul,dl)_5   & {\bullet}^v
\ar@(ur,dr)^2    \ar@/_.5pc/[l]_{4}}
$$

\end{example}

%%%%%%%%%%%
%%%%%%%%%%%
%%%%%%%%%%%

\section{Classification of Leavitt path algebras having IBN} 

%%%%%%%%%%%%%%%%
%%%%%%%%%%%%%%
%%%%%%%%%%%%%%

In the previous section we have classified the Leavitt path algebras not having IBN. Now we complete the classification of Leavitt path algebras associated to finite graphs having two vertices  by describing Leavitt path algebras having IBN. We list them in the same order of dichotomies outlined in Figure \ref {WeAreHappy}.  
%The argument is exactly the same for IBN Leavitt path algebras, hence, we do not repeat it here. 
The  invariant ideals of the families of Leavitt path algebras having IBN are summarized in Figure \ref{muge2}. 
%Let DEC in the table abbreviate "decomposable".
\begin{figure}[H]
\resizebox{17cm}{!}{
\begin{tabular}{|c|c|c|cc|c|c|c|c|}
\hline
 &DEC & PIS & Graph & & $I(P_l)$ &  $I(P_c)$ & $I(P_{ec})$ & $L_K(E)/I$ \\
\hline
&&&&&&&&\\
A1 & YES & NO & $ \xymatrix{{\bullet}^u    & {\bullet}^v     }
$ & & $K \times K$ & $0$ & $0$ &$0$\\
&&&&&&&&\\

 A2 & YES & NO & $ \xymatrix{{\bullet}^u    & {\bullet}^v\ar@(ur,dr)^{(1)}     }
$& & $K$ & $K[x,x^{-1}]$ & $0$ &$0$\\
&&&&&&&&\\
 A3 & YES & NO & $$ \xymatrix{
{\bullet}^u    & {\bullet}^v\ar@(ur,dr)^{(l_2)}     }
$$& $ l_2\ge 2$ & $K$ & $0$ & $ L(1,l_2)$ &$0$\\
&&&&&&&&\\
A4 & NO  & NO &$$\xymatrix{
{\bullet}^u   & {\bullet}^v
    \ar@/_.5pc/[l]_{(t_2)}}
$$ &  $t_2\ge 1$ & $ \M_{t_2+1}(K)$ & $0$ & $0$ & $0$ \\
&&&&&&&&\\
 A5 & NO  & NO &$$\xymatrix{
{\bullet}^u   & {\bullet}^v
\ar@(ur,dr)^{(1)}    \ar@/_.5pc/[l]_{(t_2)}}
$$ & $t_2\ge 1$ & $ \M_\infty(K)$ & $0$ & $0$ &$K[x,x^{-1}]$  \\
&&&&&&&&\\

 A6 & NO  & NO &$$\xymatrix{
{\bullet}^u   & {\bullet}^v
\ar@(ur,dr)^{(l_2)}    \ar@/_.5pc/[l]_{(t_2)}}
$$ & $l_2-t_2 \ne 1$; $l_2 > 1$; $t_2\ge 1$ & $ \M_\infty(K)$ & $0$ & $0$ &$L(1,l_2)$  \\
&&&&&&&&\\

 A7 & YES & NO & $$ \xymatrix{
{\bullet}^u\ar@(ul,dl)_{(1)}    & {\bullet}^v\ar@(ur,dr)^{(1)}     }
$$ & & 0 & $K[x,x^{-1}] \times K[x,x^{-1}]$ & $0$ &$0$\\
&&&&&&&&\\

A8 &YES &  NO & $$ \xymatrix{
{\bullet}^u\ar@(ul,dl)_{(1)}    & {\bullet}^v\ar@(ur,dr)^{(l_2)}     }
$$ & $l_2 \ge 2$& 0 & $K[x,x^{-1}]$  & $L(1,l_2)$ & $0$\\
&&&&&&&&\\

A9 &NO &  NO & $$ \xymatrix{
{\bullet}^u\ar@(ul,dl)_{(1)}    & {\bullet}^v \ar@/_.5pc/[l]_{(t_2)}}
$$ & $t_2 \geq 1$& 0 & $M_{t_2+1}(K[x,x^{-1}])$  & 0 & $0$\\
&&&&&&&&\\

A10 &NO &  NO & $$ \xymatrix{
{\bullet}^u\ar@(ul,dl)_{(1)}    & {\bullet}^v \ar@/_.5pc/[l]_{(t_2)} \ar@(ur,dr)^{(1)}     }
$$ & $t_2 \geq 1$& 0 & $M_\infty(K[x,x^{-1}])$  & $0$ & $K[x,x^{-1}]$\\
&&&&&&&&\\

A11 &  NO & NO    & $$\xymatrix{
{\bullet}^u\ar@(ul,dl)_{(1)}   & {\bullet}^v
\ar@(ur,dr)^{(l_2)}    \ar@/_.5pc/[l]_{(t_2)}}$$ &  $l_2-t_2\ne 1;  l_2 \geq 2, t_2 \geq 1$ & $0$ &  $\M_\infty(K[x,x^{-1}])$ & $0$& $L(1,l_2)$
\\ 
&&&&&&&&\\
 
A12 & NO & NO   & 
$$ \xymatrix{
{\bullet}^u   \ar@/^-.5pc/[r]_{(1)}   & {\bullet}^v \ar@/_.5pc/[l]_{(1)}
   }
$$ & 
 & $0$ & $\M_2(K[x,x^{-1}])$ & $0$ &$0$\\
&&&&&&&&\\

A13 & NO & NO   & 
$$ \xymatrix{
{\bullet}^u   \ar@(ul,dl)_{(1)} \ar@/^-.5pc/[r]_{(t_1)}   & {\bullet}^v \ar@(ur,dr)^{(l_2)}
}
$$ &$ l_2 \geq 2; t_1\geq 1$
 & $0$ & $0$ & $ \M_\infty(L(1,l_2))$ & $ K[x,x^{-1}]$\\
&&&&&&&&\\

A14 & NO & YES    & 
$$ \xymatrix{
{\bullet}^u  \ar@(ul,dl)_{(l_1)} \ar@/^-.5pc/[r]_{(t_1)}   & {\bullet}^v
\ar@(ur,dr)^{(l_2)}    \ar@/_.5pc/[l]_{(t_2)}}
$$ &
$\Delta_E = 0$ & $0$ & $0$ & $L_K(E)$ &$0$\\
&&&&$l_i-t_i\ne 1; t_i, l_i\geq 1, \ i=1,2$ &&&&\\
\hline
\end{tabular}}

\medskip

\caption{Invariants for two-vertex Leavitt path algebras having IBN}\label{muge2}
\end{figure}
In a Leavitt path algebra having IBN, clearly the order of the order unit  $[1]_E$ will be infinite and hence $K_0$ contains an infinite subgroup. 
We now compute the $K_0$ groups of each family. Note that $K_0$ is isomorphic to $\Z \times \Z_{d_E}$, 
where $d_E$ is the greatest common divisor of the entries of the matrix $N'_E$ and $\Delta_E$ is $0$ in all the families A1-A14.  

\begin{figure}[H]
\resizebox{8cm}{!}{
\begin{tabular}{|c|c|c|c|c|}
\hline
 &  $A_E$ &  $1_E'$ & $N'_E$ & $K_0$ \\
\hline
&&&&\\
A1 & $\begin{pmatrix} 0 & 0 \\ 0 & 0  \end{pmatrix}$ & $\begin{pmatrix} 0 & 0 \\ 0 & 0  \end{pmatrix}$ & $\begin{pmatrix} 0 & 0 \\ 0 & 0  \end{pmatrix}$ &$\Z \times \Z$\\
&&&&\\
 A2  & $\begin{pmatrix} 0 & 0 \\ 0 & 1  \end{pmatrix}$ & $\begin{pmatrix} 0 & 0 \\ 0 & 1  \end{pmatrix}$ & $\begin{pmatrix} 0 & 0 \\ 0 & 0  \end{pmatrix}$ &$\Z \times \Z$\\
&&&&\\
 A3  & $\begin{pmatrix} 0 & 0 \\ 0 & l_2  \end{pmatrix}$ & $\begin{pmatrix} 0 & 0 \\ 0 & 1  \end{pmatrix}$ 
& $\begin{pmatrix} 0 & 0 \\ 0 & l_2-1  \end{pmatrix}$ &$\Z \times \Z_{l_2-1}$\\
&&&&\\
A4  & $\begin{pmatrix} 0 & 0 \\ t_2 & 0  \end{pmatrix}$ & $\begin{pmatrix} 0 & 0 \\ 0 & 1  \end{pmatrix}$ & 
$\begin{pmatrix} 0 & 0 \\ t_2 & -1  \end{pmatrix}$ &  $\Z$\\
&&&&\\
 A5 & $\begin{pmatrix} 0 & 0 \\ t_2 & 1  \end{pmatrix}$ & $\begin{pmatrix} 0 & 0 \\ 0 & 1  \end{pmatrix}$ & 
$\begin{pmatrix} 0 & 0 \\ t_2 & 0  \end{pmatrix}$ &  $\Z \times \Z_{t_2}$ \\
&&&&\\
 A6  &  $\begin{pmatrix} 0 & 0 \\ t_2 & l_2  \end{pmatrix}$ & $\begin{pmatrix} 0 & 0 \\ 0 & 1  \end{pmatrix}$ 
& $\begin{pmatrix} 0 & 0 \\ t_2 & l_2-1  \end{pmatrix}$ &  $\Z \times \Z_{gcd(t_2,l_2-1)}$\\
&&&&\\
 A7 & $\begin{pmatrix} 1 & 0 \\ 0 & 1  \end{pmatrix}$ & $\begin{pmatrix} 1 & 0 \\ 0 & 1  \end{pmatrix}$ & $\begin{pmatrix} 0 & 0 \\ 0 & 0  \end{pmatrix}$ &$\Z \times \Z$\\
%&&&&\\
\hline
\end{tabular}}
\quad
\resizebox{8cm}{!}{
\begin{tabular}{|c|c|c|c|c|}
\hline
 &  $A_E$ &  $1_E'$ & $N'_E$ & $K_0$ \\
\hline
&&&&\\
A8 & $\begin{pmatrix} 1 & 0 \\ 0 & l_2  \end{pmatrix}$ & $\begin{pmatrix} 1 & 0 \\ 0 & 1  \end{pmatrix}$  
& $\begin{pmatrix} 0 & 0 \\ 0 & l_2-1  \end{pmatrix}$ & $\Z \times \Z_{l_2-1}$\\
&&&&\\
A9  & $\begin{pmatrix} 1 & 0 \\ t_2 & 0  \end{pmatrix}$ & $\begin{pmatrix} 1 & 0 \\ 0 & 1  \end{pmatrix}$  & 
$\begin{pmatrix} 0 & 0 \\ t_2 & -1  \end{pmatrix}$ & $\Z$ \\
&&&&\\
A10  & $\begin{pmatrix} 1 & 0 \\ t_2 & 1  \end{pmatrix}$ & $\begin{pmatrix} 1 & 0 \\ 0 & 1  \end{pmatrix}$  & $\begin{pmatrix} 0 & 0 \\ t_2 & 0  \end{pmatrix}$ & $\Z \times \Z_{t_2}$\\
&&&&\\
A11 & $\begin{pmatrix} 1 & 0 \\ t_2 & l_2  \end{pmatrix}$ &  $\begin{pmatrix} 1 & 0 \\ 0 & 1  \end{pmatrix}$ & $\begin{pmatrix} 0 & 0 \\ t_2 & l_2-1 \end{pmatrix}$& $\Z \times \Z_{gcd(t_2,l_2-1)}$\\ 
&&&&\\
A12 &   $\begin{pmatrix} 0 & 1 \\ 1 & 0  \end{pmatrix}$ & $\begin{pmatrix} 1 & 0 \\ 0 & 1  \end{pmatrix}$ & 
$\begin{pmatrix} -1 & 1 \\ 1 & -1  \end{pmatrix}$ & $\Z$\\
&&&&\\
A13 
 & $\begin{pmatrix} 1 & t_1 \\ 0 & l_2  \end{pmatrix}$& $\begin{pmatrix} 1 & 0 \\ 0 & 1  \end{pmatrix}$ &  $\begin{pmatrix} 0 & t_1 \\ 0 & l_2-1  \end{pmatrix}$ & $\Z \times \Z_{gcd(t_1,l_2-1)}$\\
&&&&\\
A14  & $\begin{pmatrix} l_1 & t_1 \\ t_2 & l_2  \end{pmatrix}$ & $\begin{pmatrix} 1 & 0 \\ 0 & 1  \end{pmatrix}$ & $\begin{pmatrix} l_1-1 & t_1 \\ t_2 & l_2-1  \end{pmatrix}$ & $\Z \times \Z_{d_E}$ \\
&&&& \\
\hline
\end{tabular}}
\medskip
\caption{$K_0$ for two-vertex Leavitt path algebras having IBN}\label{muge3}
\end{figure}

%%%%%%%%%%%%%%%%%%%
%%%%%%%%%%%%%%%%%%%%

By looking at the invariants in Figure \ref{muge2}, it is easily deducible that two Leavitt path algebras whose graphs are in different families from A1 through A14 are non-isomophic. Now, we study the isomorphisms within the Leavitt path algebras in each family. 
Every graph in A1, A2, A3, A4, A7, A8, A9 and A12 produces a unique Leavitt path algebra which is not isomorphic to any other. 
By looking at Figure \ref{muge3}, the $K_0$ group in the classes A5 and A10 is $\Z \times \Z_{t_2}$ and each graph produce a distinct isomorphism class again.   
\bigskip

In A6 and A11, the Leavitt path algebra $L_K(E)/I(P_{lec})$ is isomorphic to $L(1,l_2)$ this assures that any two graphs from the same family giving rise to isomorphic Leavitt path algebras must have the same $l_2$. By looking at Figure \ref{muge3}, for distinct $t_2, t_2'$, with $l_2-t_2, l_2-t_2' \ne 1$, if 
$gcd(t_2,l_2-1) \ne gcd(t_2',l_2-1)$, the $K_0$ groups are non-isomorphic, hence they produce different isomorphism classes. 
However, if  $gcd(t_2,l_2-1) = gcd(t_2',l_2-1)$, then we do not know whether the corresponding graphs produce isomorphic Leavitt path algebras. 

\bigskip

In the group A13, similarly, the invariant ideal $I(P_{ec})$ of $L_K(E)$ is isomorphic to $\M_\infty(L(1,l_2))$ and hence  any two graphs in this group having isomorphic Leavitt path algebras will have the same $l_2$, by Proposition \ref{uzo}. 
For distinct $t_1, t_1'$, if 
$gcd(t_1,l_2-1) \ne gcd(t_1',l_2-1)$, the $K_0$ groups are non-isomorphic, hence producing different isomorphism classes. 
However, if  $gcd(t_1,l_2-1) = gcd(t_1',l_2-1)$, then we do not know whether the corresponding graphs produce isomorphic Leavitt path algebras. 

\bigskip

The family A14 contains Leavitt path algebras having IBN that are purely infinite simple.

For any Leavitt path algebra 
$L_K(E)$, where $E$ is in the family A14, $\Delta_E=0$, so the Franks triple determines when the graphs belonging to A14 induce isomorphic Leavitt path algebras. 

Now, we can state the following theorem, that we have proved above.

\begin{theorem}\label{tintilladerotaBis}
Let $E$ be a finite graph with two-vertices whose Leavitt path algebra $L_K(E)$ has IBN. Then, $L_K(E)$ is isomorphic to a Leavitt path algebra whose associated graph is one in Cases A1-A14. Moreover:
\begin{enumerate}[\rm (i)]
\item In each of the Cases A1, A2, A7 and A12 the Leavitt path algebra  is isomorphic to $K\times K$, $K\times K[x,x^{-1}]$, $K[x,x^{-1}]\times K[x,x^{-1}]$ and ${\mathcal M}_2(K[x,x^{-1}])$, respectively.

\item In Case A3 the Leavitt path algebra is isomorphic to $K\times L(1,l_2)$. Two Leavitt path algebras $K\times L(1,l_2)$ and $K\times L(1,l'_2)$ are isomorphic if and only if $l_2=l'_2$.

\item  In Case A4 the Leavitt path algebra is isomorphic to ${\mathcal M}_{t_2+1}(K)$. Two Leavitt path algebras ${\mathcal M}_{t_2+1}(K)$ and ${\mathcal M}_{t'_2+1}(K)$ are isomorphic if and only if $t_2=t'_2$.

\item In Cases A5 and A10, the Leavitt path algebras are determined by their $K_0$ groups.

\item For every graph in Case A8 the associated Leavitt path algebra is decomposable and isomorphic to  $K[x, x^{-1}]\times L(1,l_2)$, for $l_2 \geq 2$. The isomorphisms are determined by the value of $l_2$.

\item In Case A9 the associated Leavitt path algebra is isomorphic to $M_{t_2+1}(K[x, x^{-1}])$. The isomorphisms are determined by $t_2$.

\item In Case A14 the associated Leavitt path algebra is purely infinite simple (in fact, it is the only one purely infinite simple  having IBN). Two Leavitt path algebras associated to graphs in this case are isomorphic if and only if their Franks triples coincide because the determinant is zero.

\item In Cases A6, A11 and A13 two different graphs $E$ and $F$ with $d_E \ne d_F$ give rise to non-isomorphic Leavitt path algebras.

\end{enumerate}
\end{theorem}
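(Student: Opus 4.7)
The plan is twofold. First, I would establish that every finite two-vertex graph $E$ for which $L_K(E)$ has IBN fits, after possible application of shift moves, into one of the fourteen canonical families A1--A14. Second, for each family I would read off, from Figures \ref{muge2} and \ref{muge3}, a set of invariants sufficient to separate non-isomorphic algebras, and where possible verify the converse.

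For the enumeration, I would argue by exhaustion on the parameters $(l_1,t_1,l_2,t_2)$ that determine a two-vertex graph, using as complement the five non-IBN families from Figure \ref{tableone}. When the graph has a sink we are in the one-sink situation of Subsection \ref{OneSinkCase}, whose IBN cases split according to whether the non-sink vertex carries no loop, one loop, or more than one loop (and according to the number of edges to the sink) to yield A1--A6. When neither vertex is a sink but $(l_1,t_1)=(1,0)$, Case 2 of Subsection \ref{Eristoff} yields A7--A11. The bidirectional edge configurations with no loops give A12 and, after absorbing loops via Theorem \ref{LuisFelipe}, A13. Finally, when both $(l_i,t_i)\neq(1,0)$ and $l_i-t_i\neq 1$ for $i=1,2$, Case 3b(i) of Subsection \ref{Eristoff} shows that $L_K(E)$ has IBN precisely when $\Delta_E=0$, which is family A14.

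For the separation step, the triple $(I(P_l),I(P_c),I(P_{ec}))$ is preserved under ring isomorphisms by \cite{ABS} and Theorem \ref{EnateMerlot-Merlot}, as is the quotient $L_K(E)/I(P_{lec})$. Combined with $K_0$ from Figure \ref{muge3}, these invariants distinguish the fourteen families pairwise. Within a family, Cases A1, A2, A7, A12 contain a single algebra each; in A3 the quotient factor $L(1,l_2)$ fixes $l_2$; in A4 the matrix size fixes $t_2$; analogously A8 and A9; in A5 and A10 the cyclic part of $K_0$ fixes the parameter. For A14 the algebra is purely infinite simple so $Z(L_K(E))\cong K$ by \cite[Theorem 3.7]{CMMSS2} and \cite[Theorem 4.2]{AC}; Proposition \ref{RingVersusAlgebra} then allows us to use \cite[Corollary 2.7]{Flow}, and since $\Delta_E=0$ the Franks triple is a complete invariant.

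The main obstacle is Cases A6, A11 and A13, where only the necessary direction can be proved. In A6 and A11 the quotient $L_K(E)/I(P_{lec})\cong L(1,l_2)$ fixes $l_2$, and the cyclic part $\Z_{d_E}$ of $K_0$ fixes $d_E=\gcd(t_2,l_2-1)$; in A13 the ideal $I(P_{ec})\cong M_\infty(L(1,l_2))$ fixes $l_2$ via Proposition \ref{uzo}, and again $K_0$ fixes $d_E$. Hence any pair with $d_E\neq d_F$ yields non-isomorphic algebras, which is exactly statement (viii). Whether coincidence of these invariants forces isomorphism remains open in these three families: the algebras are neither purely infinite simple nor, in general, of trivial center in a way that permits invoking an algebraic Kirchberg--Phillips-type theorem, and this is precisely why the theorem is stated only in one direction for A6, A11, A13.
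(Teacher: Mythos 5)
Your proposal is correct and follows essentially the same route as the paper, which likewise obtains A1--A14 as the complement of the non-IBN families of Figure \ref{tableone}, separates the families using the invariant ideals $I(P_l)$, $I(P_c)$, $I(P_{ec})$, the quotient $L_K(E)/I(P_{lec})$ and the $K_0$ data of Figure \ref{muge3}, invokes the Franks triple (through Proposition \ref{RingVersusAlgebra}) for the purely infinite simple family A14, and leaves only the necessary direction in A6, A11 and A13. The one slip --- attributing A13 to shift moves on bidirectional configurations, when it is really the $t_2=0$, $l_1=1$ subcase of Case 3b(i) with $\Delta_E=0$ --- is immaterial, since the family is correctly listed and its invariants are handled exactly as in the paper.
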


We conclude this paper with the following open question which will complete the full classification if answered either affirmative or negative.  

\begin{question}
\rm
Given any two graphs $E$ and $F$ with $d_E = d_F$,  
either in the same family $\mathcal{C}$, where $\mathcal{C} \in \{ A6, A11, A13 \}$, or 
in the family of V(b), with associated sets $S_E=\{(l_1,t_1),(l_2,0)\}$ and $S_F=\{(l_1,t_1'),(l_2,0)\}$ 
having $gcd(l_1-1-t_1, l_2-1) = gcd(l_1-1-t_1', l_2-1)$, are the Leavitt path algebras $L_K(E)$ and $L_K(F)$ isomorphic?  
\end{question}

%%%%%%%%%%%%%%%%%%%%%%%%%%%%%%
\section*{Acknowledgments}
%%%%%%%%%%%%%%%%%%%%%%%%%%%%%%

The authors would like to thank Adam Peder Wie S{\o}rensen for  useful comments.


\begin{thebibliography}{10}

\bibitem{AALP} \textsc{Gene Abrams, Pham Ngoc \'Anh, Adel Louly, Enrique Pardo}, The classification question for Leavitt path algebras, \emph{J. Algebra} \textbf{320} (2008), 1983--2026.

\bibitem{Flow} \textsc{Gene Abrams, Adel Louly, Enrique Pardo, Cristopher Smith}. 
Flow invariants in the classification of Leavitt path algebras.
\emph{Journal of Algebra}. \textbf{333 (1)} (2011), 202--231.

\bibitem{AAS} \textsc{Gene Abrams, Pere Ara, Mercedes Siles Molina}, \textit{Leavitt path algebras}, Lecture Notes in Mathematics, Springer (2017). 

\bibitem{AA1} \textsc{Gene Abrams, Gonzalo Aranda Pino}, The Leavitt path algebra of a graph, \emph{J. Algebra}, \textbf{293} (2005), 319--334. 

\bibitem{AAS2} \textsc{Gene Abrams, Gonzalo Aranda Pino, Mercedes Siles Molina}, Locally finite Leavitt path algebras, \emph{Israel J. Math.}, \textbf{165} (2008), 329--348. 


\bibitem{atlas} \textsc{Pablo Alberca Bjerregaard, Gonzalo Aranda Pino,  Dolores Mart\'{\i}n Barquero, C\'andido Mart\'{\i}n Gonz\'{a}lez, Mercedes Siles Molina}, Atlas of Leavitt path algebras of small graphs. 
\emph{J. Math. Soc. Japan}  \textbf{66 (2)} (2014), 581--611.

\bibitem{AB} \textsc{Pere Ara, Miquel Brustenga}, Module theory over Leavitt path algebras and $K$-theory, \emph{J. Pure Appl. Algebra} \textbf{214 (7)} (2010), 1131--1151.

\bibitem{AMP} \textsc{Pere Ara, Mar\'{\i}a  \'Angeles Moreno,  Enrique Pardo}, Nonstable K-theory for Graph Algebras. \emph{Algebr. Representation Theor.} \textbf{10} (2007), 157--178.

\bibitem{ABS} \textsc{Gonzalo Aranda Pino,  Jose Brox, 
Mercedes Siles Molina}, Cycles in Leavitt path algebras by means of idempotents.
\emph{Forum Mathematicum}, \textbf{27}  (2015), 601--633.  

\bibitem{AC} \textsc{Gonzalo Aranda Pino,  Kathi  Crow}, 
The center of a Leavitt path algebra. \emph{Rev. Mat. Iberoam.} \textbf{27}(2) (2011), 621--644.


%\bibitem{AN} \textsc{Gonzalo Aranda Pino, Alireza R. Nasr-Isfahani}, Decomposable Leavitt path algebras for arbitrary graphs. 
%\emph{Forum Mathematicum}, 27 (2015), 3509-3532. 


\bibitem{CMMS}\textsc{Lisa O. Clark, Dolores Mart\'{\i}n Barquero, C\'andido Mart\'{\i}n Gonz\'alez, Mercedes Siles Molina.}  Using Steinberg algebras to study decomposability of Leavitt path algebras. \emph{Forum Mathematicum}. Published Online: 2016-12-14 | DOI: https://doi.org/10.1515/forum-2016-0062.

\bibitem{Maria} \textsc{Mar\'\i a G. Corrales Garc\'\i a}, {Nuevas aportaciones al estudio de las \'algebras de caminos de Leavitt. Doctoral dissertation. Universidad de M\'alaga} (2016).

\bibitem{CMMSS2} \textsc{Mar\'ia G. Corrales Garc\'\i a,  Dolores Mart\'{\i}n Barquero, C\'andido Mart\'{\i}n Gonz\'{a}lez, 
Mercedes Siles Molina, Jos\'e F. Solanilla Hern\'andez}, Extreme cycles. The center of a Leavitt path algebra. \emph{Pub. Mat.} \textbf{60} (2016), 235--263.




\bibitem{HH} \textsc{Brian Hartley, Trevor O. Hawkes},
\emph{Rings, Modules and Linear Algebra}, Chapman $\&$ Hall, London-New York,  xi+210 pp. (1980).


\bibitem{KO} \textsc{M\"uge Kanuni, Murad \"Ozaydin}, Cohn-Leavitt path algebras and the invariant basis number property. \emph{To appear.} arXiv:1606.07998v1.

\bibitem{L} \textsc{William G. Leavitt}, The module type of a ring.
\emph{Trans. Amer. Math. Soc.} \textbf{103} (1962), 113--130.


\bibitem{Smith} \textsc{Henry J. Stephen Smith}, On systems of linear indeterminate equations and congruences. \emph{Philosophical Trans. of the Royal Soc. of London} \textbf{151} (1861), 293--326.

\bibitem{SZE} \textsc{Hu Sze-Tsen},
\emph{Introduction to homological algebra}, Holden-Day INC., San Francisco (1968).

\end{thebibliography}
\end{document}